\newcommand\p{\circle*{0.2}}
\newtheorem{theorem}{Theorem}
\newtheorem{lemma}{Lemma}
\title{On word-representability of polyomino triangulations}
\author{Prosper Akrobotu\thanks{AIMS-Ghana, P. O. BOX DL 676, Adisadel - Cape Coast. \newline Email: prosper@aims.edu.gh}, Sergey Kitaev\thanks{School of Computer and Information Sciences, University of Strathclyde, Glasgow, G1 1HX, UK. \newline Email: sergey.kitaev@cis.strath.ac.uk}, and Zuzana Mas\'{a}rov\'{a}\thanks{Department of Combinatorics and Optimization, University of Waterloo, Waterloo, N2L 3G1, Canada. \newline Email: zmasarova@uwaterloo.ca}}
\begin{document}

\maketitle
\thispagestyle{empty}

\begin{abstract}  A graph $G=(V,E)$ is word-representable if there exists a word $w$ over the alphabet $V$ such that letters $x$ and $y$ alternate in $w$ if and only if $(x,y)$ is an edge in $E$. Some graphs are word-representable, others are not. It is known that a graph is word-representable if and only if it accepts a so-called semi-transitive orientation. 

The main result of this paper is showing that a triangulation of any convex polyomino is word-representable if and only if it is 3-colorable. We demonstrate that this statement is not true for an arbitrary polyomino. We also show that the graph obtained by replacing each $4$-cycle in a polyomino by the complete graph $K_4$ is word-representable. We employ semi-transitive orientations to obtain our results. \\

\noindent
{\bf Keywords:} word-representability, (convex) polyomino, triangulation, semi-transitive orientation \end{abstract}

\section{Introduction}\label{intro}

A graph $G=(V,E)$ is word-representable if there exists a word $w$ over the alphabet $V=V(G)$ such that letters $x$ and $y$ alternate in $w$ if and only if $(x,y)$ is an edge in $E=E(G)$. For example, the cycle graph on 4 vertices labeled by 1, 2, 3 and 4 in clockwise direction can be represented by the word 14213243.  There is a long line of research on word-representable graphs \cite{CKL}--\cite{KitSalSevUlf2}, and the current paper is a continuation of this line of research. A comprehensive introduction to the theory of word-representable graphs will be given in~\cite{KL}.

A directed graph (digraph) $G=(V,E)$ is \emph{semi-transitive} if it has no directed cycles
and for any directed path $v_1v_2\cdots v_k$ with $k\geq 4$ and $v_i\in V$, 
either $v_1v_k\not\in E$ or $v_iv_j\in E$ for all $1\le i<j\le k$. 
In the second case, when $v_1v_k\in E$, we say that $v_1v_k$ is a {\em shortcut}. 
The importance of this notion is due to the following result proved in \cite{HKP2011}.

\begin{theorem}[\cite{HKP2011}]\label{thm:semi-trans}  
A graph is word-representable if and only if it admits a semi-transitive orientation. 
\end{theorem}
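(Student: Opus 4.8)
The statement is an equivalence, so the plan is to prove two implications: that a representing word yields a semi-transitive orientation, and that a semi-transitive orientation can be turned into a representing word. Before either direction, I would record the standard normalization that a word-representable graph admits a \emph{uniform} representant, i.e.\ a word $w$ in which every letter of $V$ occurs the same number of times; one obtains such a $w$ from any representant by padding, and padding does not disturb which pairs of letters alternate. This lets me reason about first occurrences and block structure cleanly.

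For the forward direction (word $\Rightarrow$ orientation), fix a representant $w$ and orient each edge $\{x,y\}\in E$ as $x\to y$ precisely when the first copy of $x$ precedes the first copy of $y$ in $w$. Since every arc increases the first-occurrence position, the linear order of first occurrences is respected by all arcs, so the orientation is acyclic. The work is to verify the shortcut condition: given a directed path $v_1\to v_2\to\cdots\to v_k$ together with an arc $v_1\to v_k$, I must show that $v_iv_j\in E$ for all $i<j$. I would prove this by analysing the relative positions of the copies of the $v_i$: the alternation of the end letters $v_1$ and $v_k$ pins down where the copies of each intermediate $v_i$ can sit, and combining this with the alternations forced along the path shows that every pair $v_i,v_j$ must itself alternate in $w$, hence be an edge. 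This positional case analysis (by induction on $k$, or directly) is the technical core of this direction.

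For the reverse direction (orientation $\Rightarrow$ word), I would construct a representant from a semi-transitive orientation $D$. The clean special case is a \emph{transitive} orientation: then $G$ is a comparability graph, and concatenating several linear extensions of the induced partial order gives a word in which two letters alternate iff they are comparable, i.e.\ adjacent. The general case fails to be transitive exactly at shortcuts, so the plan is to build the word incrementally along a topological order $v_1,v_2,\dots,v_n$ of $D$: having a word representing the digraph on $\{v_1,\dots,v_{t-1}\}$, insert the copies of $v_t$ into positions dictated by its in- and out-neighbourhoods so that $v_t$ alternates with exactly its neighbours. The role of semi-transitivity is to guarantee that the ``must alternate with'' and ``must not alternate with'' constraints on the new letter are simultaneously satisfiable, so that the insertion never gets stuck.

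I expect the reverse direction to be the main obstacle. Acyclicity only buys the existence of the topological order; it is the shortcut condition that must be converted into the statement that a consistent insertion of each new letter always exists, and then one must still check that the completed word realizes $E$ exactly, with no spurious alternations among the old letters created by the insertions. Making this insertion argument precise --- identifying the exact positions for the copies of $v_t$ and proving the invariant is maintained --- is where the real content lies; the forward direction, by contrast, is essentially a bookkeeping argument about first occurrences and alternation.
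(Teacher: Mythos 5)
First, note that the paper does not prove this statement at all: Theorem~\ref{thm:semi-trans} is quoted from \cite{HKP2011} with no proof supplied (only its corollary, Theorem~\ref{thm:3col}, is proved in the text), so there is no in-paper argument to compare yours against; your proposal has to be judged on its own. Your forward direction is essentially the standard and correct argument: pass to a uniform representant with $p$ copies of each letter, orient $x\to y$ when the first $x$ precedes the first $y$, observe that an arc $x\to y$ forces $x_i<y_i<x_{i+1}$ for every $i$, chain these inequalities along the path $v_1\to\cdots\to v_k$, and use the arc $v_1\to v_k$ to sandwich $(v_j)_i<(v_l)_i<(v_j)_{i+1}$ for all $j<l$, so every pair alternates. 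That direction is completable exactly as you describe.

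The reverse direction, however, contains a genuine gap, and it is precisely at the point you flag as ``where the real content lies.'' Your plan --- take any word representing $G[\{v_1,\dots,v_{t-1}\}]$ and insert the copies of $v_t$ --- does not work as stated: semi-transitivity does not guarantee that an arbitrary representant of the smaller graph admits a consistent insertion of the new letter, only that some suitably prepared representant does, and constructing that representant is the theorem. Two ideas are missing. First, the structural consequence of semi-transitivity that makes insertion possible at all: if $v$ is a source (take $v_t$ last in the topological order, so it is a sink, symmetrically), then any directed path $v\to a\to b\to c$ inside $N^+(v)\cup\{v\}$ has the arc $v\to c$ present, so the shortcut condition forces all pairs among $a,b,c$ to be arcs oriented consistently; hence the orientation induced on $N^+(v)$ is transitive, so $N(v)$ is a comparability graph and is representable by a concatenation of permutations. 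Second, the inductive hypothesis must be strengthened: in the argument of \cite{HKP2011} one does not insert into a generic word but first rewrites the word for $G-v$ (roughly doubling occurrences and arranging the letters of $N(v)$ into contiguous permutation blocks) so that the two copies of $v$ can be placed around such a block without creating spurious alternations among old letters; this is also where the bound of $2(n-\kappa)$ occurrences per letter comes from. Without these two steps your insertion argument can get stuck, so as written the reverse direction is an outline rather than a proof.
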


A graph is $k$-colorable if its vertices can be colored in at most $k$ colors so that no pair of vertices having the same color is connected by an edge. A direct corollary to the last theorem is the following result also relevant to our paper.

\begin{theorem}[\cite{HKP2011}]\label{thm:3col} 
All $3$-colorable graphs are word-representable.
\end{theorem}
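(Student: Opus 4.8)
The plan is to deduce this directly from Theorem~\ref{thm:semi-trans}: it suffices to exhibit a semi-transitive orientation of any $3$-colorable graph $G$. Fix a proper $3$-coloring $c:V\to\{1,2,3\}$, so that adjacent vertices receive distinct colors. I would orient every edge from its lower-colored endpoint to its higher-colored endpoint; that is, an edge $\{x,y\}$ with $c(x)<c(y)$ becomes the arc $x\to y$. Because the coloring is proper, no edge has equal-colored endpoints, so this rule orients every edge unambiguously.

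Next I would verify acyclicity. Along any directed edge the color value strictly increases, hence along any directed path the colors form a strictly increasing sequence. A directed cycle would have to return to its starting vertex, which is impossible once the color strictly increases at each step; therefore the orientation has no directed cycles.

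The key observation, and the point that makes the argument essentially immediate rather than obstructed, is a length bound on directed paths. Since colors strictly increase along a directed path $v_1v_2\cdots v_k$, we get $c(v_1)<c(v_2)<\cdots<c(v_k)$, and as only three color values are available this forces $k\le 3$. Consequently there are \emph{no} directed paths with $k\ge 4$ at all, so the condition in the definition of semi-transitivity---which quantifies only over paths of length at least four---is satisfied vacuously. Thus the orientation is semi-transitive.

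Combining these two facts, the constructed orientation is acyclic and vacuously free of bad shortcuts, hence semi-transitive; by Theorem~\ref{thm:semi-trans} the graph $G$ is word-representable. I do not anticipate any genuine obstacle here: the entire content is the remark that three colors cap the length of any monotone (directed) path at three, so the only part needing care is checking that the path-length bound really does render the shortcut condition vacuous rather than merely easy to verify.
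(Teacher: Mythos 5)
Your orientation (from lower color class to higher color class) is exactly the one the paper uses, and your justification---colors strictly increase along directed paths, so no path on four or more vertices exists and the shortcut condition holds vacuously---correctly fills in the detail the paper leaves implicit. The proposal is correct and takes essentially the same approach as the paper.
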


\begin{proof} 
Partitioning a 3-colorable graph in three independent sets, say I, II and III, 
and orienting all edges in the graph so that they are oriented from I to II and III, and from II to III, 
we obtain a semi-transitive orientation.
\end{proof}

We note that, for $k\geq 4$, there are examples of non-word-representable graphs that are $k$-colorable, but not 3-colorable. For example, the wheel $W_5$ on 6 vertices is such a graph.

A {\em polyomino} is a plane geometric figure formed by joining one or more equal squares edge to edge. Letting conners of squares in a polyomino be vertices, we can treat polyominos as graphs. In particular, well-known {\em grid graphs} are obtained from polyominos in this way. A particular class of graphs of our interest is given by {\em convex polyominos}. A polyomino is said to be {\em column convex} if its intersection with any vertical line is convex (in other words, each column has no holes). Similarly, a polyomino is said to be {\em row convex} if its intersection with any horizontal line is convex. A polyomino is said to be convex if it is row and column convex.

We will consider {\em triangulations} of a polyomino. Note that no triangulation is 2-colorable -- at least three colors are needed to color properly a triangulation, while four colors is always enough to colour any triangulation since we deal with planar graphs and it is well-known that such graphs are 4-colorable. Not all triangulations of a polyomino are 3-colorable -- for example, see Figure~\ref{non-repr-triang} for non-3-colorable triangulations (which are straightforward to check to require four colors, and also to be the only such triangulations, up to rotations, of a $3 \times 3$ grid graph). The main result of this paper is the following theorem.

\begin{theorem}\label{main-thm} A triangulation of a convex polyomino is word-representable if and only if it is $3$-colorable. \end{theorem}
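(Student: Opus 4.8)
The plan is to treat the two directions separately, with essentially all of the work in the forward (``only if'') direction. The reverse direction is immediate: if the triangulation is $3$-colorable it is word-representable by Theorem~\ref{thm:3col}, and in fact the orientation built in the proof of that theorem is already the semi-transitive certificate demanded by Theorem~\ref{thm:semi-trans}. So I would spend the rest of the argument proving, by contrapositive, that a \emph{non}-$3$-colorable triangulation $T$ of a convex polyomino is not word-representable.

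The heart of the matter is to convert non-$3$-colorability into a concrete local obstruction, via the following claim: \emph{a triangulation $T$ of a convex polyomino is $3$-colorable if and only if every interior lattice point has even degree in $T$.} To prove it I would use that a row- and column-convex polyomino has no holes, so $T$ triangulates a topological disk. Given colors in $\mathbb{Z}_3$, encode a proper $3$-coloring by attaching to each triangle $t$ a chirality $s(t)\in\{+1,-1\}$ recording whether the colors increase or decrease as one traverses $t$ counterclockwise. A short local computation shows that two triangles sharing an edge must receive opposite chiralities; hence a $3$-coloring exists exactly when these $\pm1$ labels can be assigned consistently, i.e.\ when the triangle-adjacency (dual) graph is bipartite. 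Because the region is simply connected, the cycle space of that dual graph is generated by the little cycles of triangles encircling the interior vertices, and such a cycle has length equal to the degree of its vertex; so the dual graph is bipartite precisely when every interior vertex has even degree. This is the one step where convexity is genuinely used (through simple-connectivity), and it is exactly the step that must fail for polyominoes with holes.

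Now suppose $T$ is not $3$-colorable. By the claim there is an interior lattice point $v$ of odd degree. Since $v$ lies strictly inside a simply connected region, all four surrounding unit cells belong to the polyomino, so $v$ is joined to its four axis-neighbours together with one endpoint of each diagonal through $v$; thus $\deg(v)=4+k$ with $k\in\{0,\dots,4\}$ the number of such diagonals, and odd degree forces $k\in\{1,3\}$, i.e.\ $\deg(v)\in\{5,7\}$ (these are exactly the local pictures of Figure~\ref{non-repr-triang}, up to rotation). Any edge of $T$ joining two neighbours of $v$ must lie in one of the four cells around $v$, and a direct check of that $2\times2$ block shows the neighbours of $v$ induce a chordless cycle (the link of $v$); hence $\{v\}\cup N(v)$ induces $W_5$ or $W_7$. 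Since $W_5$ is not word-representable (as recalled in the introduction) and the same holds for $W_7$, and since an induced subgraph of a word-representable graph is again word-representable, $T$ cannot be word-representable, completing the contrapositive.

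The substantive obstacle is the claim, and specifically the \emph{sufficiency} of ``all interior degrees even'': one must argue that on a simply connected region the vertex cycles are the only obstructions to the $\pm1$ labelling, so that bipartiteness of the dual is truly equivalent to evenness rather than merely implied by it. By contrast, once the odd-degree vertex is produced the wheel extraction is bookkeeping; the only external input needed there is that $W_7$ (not just $W_5$) admits no semi-transitive orientation, which I would verify directly in the spirit of the known argument for $W_5$.
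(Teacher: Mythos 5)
Your proof is correct, but it takes a genuinely different route from the paper's. The paper proves the hard direction via two combinatorial lemmas: a greedy row-by-row colouring argument (Lemmas~\ref{lemma-grid} and~\ref{lemma-conv-polyomino}) showing that the first forced use of colour 4 pins down an induced copy of $T_1$ or $T_2$ from Figure~\ref{non-repr-triang} (up to rotation), with convexity exploited through explicit recolouring of the regions $A$, $B$, $C$; and a lengthy exhaustive branching search (Theorem~\ref{T1-T2}) showing $T_1$ and $T_2$ admit no semi-transitive orientation. You instead invoke the classical parity criterion --- a triangulated disk is $3$-colourable iff every interior vertex has even degree --- and extract an induced odd wheel $W_5$ or $W_7$ around an odd-degree interior vertex; your verification that the link of such a vertex is an induced cycle is sound, and in fact the paper's $T_1$ and $T_2$ contain induced copies of $W_5$ and $W_7$ respectively (with vertex $5$ as hub), so the two arguments converge on the same local obstructions. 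What your approach buys is conceptual clarity, avoidance of the case analysis, and a genuinely stronger conclusion: the argument works for any polyomino whose underlying region is a topological disk, not just convex ones (consistent with the paper, whose counterexample in Subsection~\ref{triangArbitrary} has a hole). The price is two imported facts: the parity lemma (whose sufficiency direction you correctly flag as the substantive step; your dual-bipartiteness-plus-integration sketch is the standard valid proof), and the non-word-representability of $W_7$, which you need not reprove --- all odd wheels $W_{2k+1}$, $k\geq 2$, are shown non-word-representable in~\cite{KP2008}. One small imprecision: convexity gives you slightly more than simple connectivity; you also need the region to have no pinch points (two cells meeting only at a corner with the other two absent), since otherwise the weak dual's face structure and the ``interior vertex'' dichotomy break down. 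Convex polyominoes do satisfy this, so the theorem as stated is unaffected, but the remark that convexity enters ``only through simple-connectivity'' should be adjusted.
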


In Section~\ref{sec2}, we employ semi-transitive orientations to show that two graphs in Figure~\ref{non-repr-triang} are non-word-representable. These graphs are to be used in the proof of Theorem~\ref{main-thm} in Section~\ref{sec3}. Subsection~\ref{triangArbitrary} shows that Theorem~\ref{main-thm} is not true for an arbitrary polyomino. In Section~\ref{sec4}, we consider a relevant direction of research: we prove that replacing each 4-cycle of a polyomino $\mathcal{P}$ by the complete graph $K_4$ gives a graph $\mathcal{P}_{K_4}$, which is word-representable. 
Finally, in Section~\ref{final-remarks-sec} we provide concluding remarks.

\section{Non-word-representability of graphs in Figure~\ref{non-repr-triang}}\label{sec2}

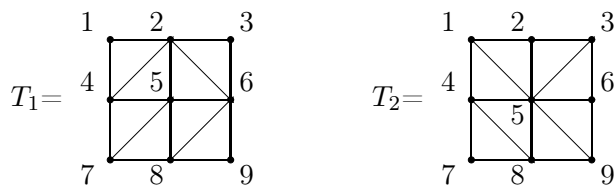
\begin{figure}[h]
\begin{center}
\begin{picture}(6,4.5)

\put(-5,0){


\put(-3.3,1.8){$T_1$=}

\put(-1,4.3){1} \put(1.3,4.3){2} \put(4.3,4.3){3} 
\put(-1,2.3){4} \put(1.3,2.3){5} \put(4.3,2.3){6} 
\put(-1,-0.8){7} \put(1.3,-0.8){8} \put(4.3,-0.8){9} 

\put(0,4){\p} \put(2,4){\p} \put(4,4){\p} 
\put(0,2){\p} \put(2,2){\p} \put(4,2){\p} 
\put(0,0){\p} \put(2,0){\p} \put(4,0){\p} 

\put(0,0){\line(1,0){4}}
\put(0,2){\line(1,0){4}}
\put(0,4){\line(1,0){4}}
\put(0,0){\line(0,1){4}}
\put(2,0){\line(0,1){4}}
\put(4,0){\line(0,1){4}}

\put(0,0){\line(1,1){2.1}}
\put(0,2){\line(1,1){2.1}}
\put(2,0){\line(1,1){2.1}}
\put(2,4){\line(1,-1){2.1}}

}

\put(7,0){


\put(-3.3,1.8){$T_2$=}

\put(-1,4.3){1} \put(1.3,4.3){2} \put(4.3,4.3){3} 
\put(-1,2.3){4} \put(1.3,1.2){5} \put(4.3,2.3){6} 
\put(-1,-0.8){7} \put(1.3,-0.8){8} \put(4.3,-0.8){9} 

\put(0,4){\p} \put(2,4){\p} \put(4,4){\p} 
\put(0,2){\p} \put(2,2){\p} \put(4,2){\p} 
\put(0,0){\p} \put(2,0){\p} \put(4,0){\p} 

\put(0,0){\line(1,0){4}}
\put(0,2){\line(1,0){4}}
\put(0,4){\line(1,0){4}}
\put(0,0){\line(0,1){4}}
\put(2,0){\line(0,1){4}}
\put(4,0){\line(0,1){4}}

\put(0,2){\line(1,-1){2.1}}
\put(0,4){\line(1,-1){4}}
\put(2,2){\line(1,1){2.1}}

}

\end{picture}
\caption{Graphs $T_1$ and $T_2$.} \label{non-repr-triang}
\end{center}
\end{figure}

\begin{theorem}\label{T1-T2} Graphs $T_1$ and $T_2$ in Figure~\ref{non-repr-triang} are not word-representable. \end{theorem}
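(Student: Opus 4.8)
The plan is to invoke Theorem~\ref{thm:semi-trans}, so that it suffices to show that neither $T_1$ nor $T_2$ admits a semi-transitive orientation, together with the standard fact that word-representability is hereditary: deleting from a representing word all letters outside an induced subgraph $H$ yields a word representing $H$, so any graph containing a non-word-representable induced subgraph is itself non-word-representable. With this in hand, the first step is to locate small non-word-representable induced subgraphs. Taking vertex $5$ as a hub, I observe that in $T_1$ its neighbours $\{2,4,6,7,8\}$ induce exactly the chordless $5$-cycle $2\,4\,7\,8\,6\,2$, so $\{2,4,5,6,7,8\}$ induces the wheel $W_5$; and in $T_2$ the neighbours $\{1,2,3,4,6,8,9\}$ of $5$ induce exactly the chordless $7$-cycle $1\,2\,3\,6\,9\,8\,4\,1$, so $\{1,2,3,4,5,6,8,9\}$ induces the wheel $W_7$. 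For $T_1$ I can then simply invoke the already-stated fact that $W_5$ is non-word-representable; the remaining work is to prove the analogous statement for $W_7$.

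The heart of the argument is therefore a lemma: for odd $n\ge 5$ the wheel $W_n$ (a hub $h$ joined to a cycle $c_1c_2\cdots c_n$) has no semi-transitive orientation. I would argue by contradiction, fixing such an orientation; being semi-transitive it is acyclic, so every triangle $hc_ic_{i+1}$ is transitively oriented. I label each cycle vertex $c_i$ by $I$ if its spoke is oriented $c_i\to h$ and by $O$ if it is oriented $h\to c_i$, and I record its role in the cycle orientation as a local source, local sink, or through-vertex. The key local constraints, extracted by hunting for forbidden shortcuts through the hub, are: (i) if $c_j$ is a through-vertex, say $c_{j-1}\to c_j\to c_{j+1}$, then necessarily $c_{j-1}$ is labelled $I$ and $c_{j+1}$ is labelled $O$ (otherwise the path $h\to c_{j-1}\to c_j\to c_{j+1}$ or $c_{j-1}\to c_j\to c_{j+1}\to h$ is either a directed cycle or a shortcut whose required chord $c_{j-1}c_{j+1}$ is absent, using $n\ge 5$); and (ii) if $c_j$ is a local source or sink, then its two neighbours $c_{j-1},c_{j+1}$ carry the \emph{same} label (otherwise the path $c_j\to c_{j+1}\to h\to c_{j-1}$, or its mirror image, is a shortcut missing the chord $c_{j-1}c_{j+1}$). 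Here I would use the standard observation that reversing all arrows preserves semi-transitivity in order to deduce the sink case from the source case.

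To finish, I translate these constraints into a parity obstruction. For each cycle edge $\{c_j,c_{j+1}\}$ let $d_j$ record its direction and $\sigma_j$ record whether the labels of $c_j$ and $c_{j+1}$ differ. Since a vertex is a source or sink exactly when its two incident cycle edges disagree in direction, constraints (i) and (ii) say precisely that at every $c_j$ one has $\sigma_{j-1}\oplus\sigma_j = 1\oplus(d_{j-1}\oplus d_j)$; equivalently, setting $x_j=\sigma_j\oplus d_j$, consecutive values satisfy $x_{j-1}\neq x_j$ all the way around the cycle. A cyclic binary string in which every two adjacent entries differ must have even length, contradicting $n$ odd. Hence $W_n$ has no semi-transitive orientation, so $W_5$ and $W_7$ are non-word-representable, and by heredity so are $T_1$ and $T_2$.

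I expect the main obstacle to be step (ii) together with the exhaustiveness of the shortcut search: one must check, across the source/sink/through cases and each placement of the $I/O$ labels, that the contrary assumption always produces either a directed cycle (contradicting acyclicity) or a genuine shortcut whose missing chord is $c_{j-1}c_{j+1}$, which is where the hypothesis $n\ge 5$ (so that $c_{j-1}$ and $c_{j+1}$ are non-adjacent) is essential. A cleaner but more computational alternative, should the wheel lemma prove awkward to state, would be to run the same source/sink/through analysis directly on the nine-vertex graphs $T_1$ and $T_2$; the parity count is then replaced by a finite case check, but the underlying mechanism — an unavoidable hub-shortcut forced by the odd outer cycle — is identical.
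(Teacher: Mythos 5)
Your proof is correct, but it takes a genuinely different route from the paper's. The paper proves Theorem~\ref{T1-T2} by brute force: starting from a single arbitrarily oriented edge, it repeatedly applies the forced completions of partially oriented triangles and $4$-cycles (Figure~\ref{rules}), branches where no completion is forced, and verifies that every resulting partial orientation of $T_1$ and of $T_2$ terminates in a shortcut. You instead reduce to induced odd wheels, and I have checked your two structural claims: in $T_1$ the neighbourhood $\{2,4,6,7,8\}$ of vertex $5$ carries exactly the edges $24,47,78,86,62$, so $\{2,4,5,6,7,8\}$ induces $W_5$; in $T_2$ the neighbourhood $\{1,2,3,4,6,8,9\}$ of vertex $5$ carries exactly the edges $12,23,36,69,98,84,41$, so the eight vertices other than $7$ induce $W_7$. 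Your wheel lemma is also sound: constraints (i) and (ii) are precisely what one gets by examining the length-$3$ directed paths through the hub, the non-adjacency of $c_{j-1}$ and $c_{j+1}$ (valid since $n\geq 4$) is what converts each bad configuration into a directed cycle or a shortcut, and the parity relation $x_{j-1}\neq x_j$ around a cycle of odd length is a genuine contradiction. Combined with the standard hereditary property of word-representability (which you correctly justify by restricting a representing word to the vertices of the induced subgraph), this gives a complete proof. What your route buys is brevity, a conceptual explanation of \emph{why} $T_1$ and $T_2$ fail, a generalization to all odd wheels $W_n$ with $n\geq 5$, and a self-contained proof of the non-word-representability of $W_5$, which the paper only asserts in its introduction; what the paper's exhaustive search buys is independence from spotting the hidden wheels and from the hereditary lemma, at the cost of a long and error-prone case analysis.
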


\begin{proof} 

We show that any attempt to orient edges of the graph $T_{i}$, for
$i=1,2$, necessarily results in creating a shortcut. Hence, $T_{i}$
does not admit a semi-transitive orientation and, by Theorem \ref{thm:semi-trans},
is not word-representable.

Notice that for any of the partial orientations of the 3- or 4-cycle
given in Figure \ref{rules}, there is a unique way of completing
these orientations, also shown in  Figure~\ref{rules}, so that oriented cycles and shortcuts are avoided. This stays true in the context of $T_{i}$,
where, although each 4-cycle is triangulated, it is never a part
of a $K_{4}$, which does admit an alternative semi-transitive
(in fact, transitive) orientation completion.

\begin{figure}[h]
\begin{center}
\begin{picture}(6,4.5)

\put(-17,0){


%

\put(0,0){\p} 
\put(4,0){\p} 
\put(4,3){\p} 

\put(0,0){\vector(1,0){4}}
\put(4,0){\vector(0,1){3}}
\put(0,0){\line(4,3){4}}
\put(5,1){$\longmapsto$}

}
\put(-10,0){

\put(0,0){\p} 
\put(4,0){\p} 
\put(4,3){\p} 

\put(0,0){\vector(1,0){4}}
\put(4,0){\vector(0,1){3}}
\put(0,0){\vector(4,3){4}}

}

\put(-2,0){

\put(4,0){\p}
\put(4,3){\p}
\put(0,3){\p}
\put(0,0){\p}

\put(0,0){\vector(1,0){4}}
\put(4,0){\vector(0,1){3}}
\put(0,0){\line(0,1){3}}
\put(0,3){\line(1,0){4}}

\put(5,1){$\longmapsto$}
}

\put(5,0){


\put(4,0){\p}
\put(4,3){\p}
\put(0,3){\p}
\put(0,0){\p}

\put(0,0){\vector(1,0){4}}
\put(4,0){\vector(0,1){3}}
\put(0,0){\vector(0,1){3}}
\put(0,3){\vector(1,0){4}}

}

\put(12,0){

\put(4,0){\p}
\put(4,3){\p}
\put(0,3){\p}
\put(0,0){\p}

\put(0,0){\vector(1,0){4}}
\put(4,0){\line(0,1){3}}
\put(0,0){\line(0,1){3}}
\put(4,3){\vector(-1,0){4}}

\put(5,1){$\longmapsto$}
}

\put(19.5,0){

\put(4,0){\p}
\put(4,3){\p}
\put(0,3){\p}
\put(0,0){\p}

\put(0,0){\vector(1,0){4}}
\put(4,3){\vector(0,-1){3}}
\put(0,0){\vector(0,1){3}}
\put(4,3){\vector(-1,0){4}}

}

\end{picture}


\caption{Unique way of completing partial orientations on certain subgraphs
of $T_{i}$.}
\label{rules}
\end{center}
\end{figure}
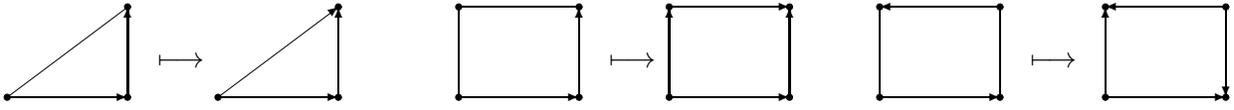

We use the following terminology. \emph{Complete XYW(Z)} refers to
completing the orientations on cycle $XYW(Z)$ according to the respective
cases in Figure \ref{rules}. Instances in which it is not possible
to uniquely determine orientations of any additional edges in $T_{i}$
are referred to as \emph{Branching XY}. Here, one picks a new, still non-oriented edge
$(X,Y)$ of $T_{i}$ and assigns the orientation $X\rightarrow Y$, while, at the
same time, one makes a copy of $T_{i}$ with its partial orientations
and assigns orientation $Y\rightarrow X$ to the edge $(X,Y)$. The new copy is named and examined
later on. Our terminology and relevant abbreviations are
summarized in Table~\ref{table1}.

\begin{table}[h!]  
\centering 
\begin{tabular}{|c|l|} 
\hline 
{\bf Abbreviation} & {\bf Operation} \\ 
\hline 
B & Branch \\
NPOC & Obtain a new partially oriented copy of $T_{i}$ \\
C & Complete \\ 
MC & Move to a copy \\
S & Obtain a shortcut \\

\hline
\end{tabular} 
\caption{List of used operations and their abbreviations.} 
\label{table1}
\end{table}

We now exhaustively search for possible semi-transitive orientations of $T_{i}$. Without loss of generality, the first orientation of an edge in $T_{i}$
can be picked at random (this is because whenever an oriented graph
contains a shortcut, then so does the graph in which all orientations
are reversed). The following two procedures prove
that any orientation of $T_{1}$ or $T_{2}$ necessarily results in
a shortcut. \\

\noindent
{\bf Orienting $T_{1}$. } Name $A$ the first copy of $T_{1}$ with single edge orientation
$78$ and carry out the following operations. 

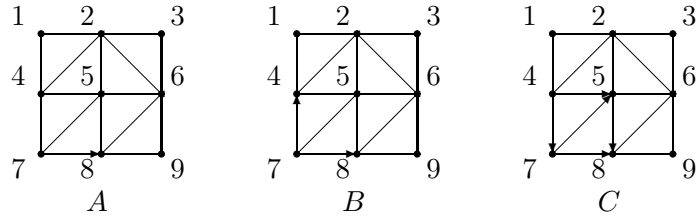
\begin{figure}[h]

\begin{center}
\begin{picture}(6,5)

\put(-7,0){


\put(-1,4.3){1} \put(1.3,4.3){2} \put(4.3,4.3){3} 
\put(-1,2.3){4} \put(1.3,2.3){5} \put(4.3,2.3){6} 
\put(-1,-0.8){7} \put(1.3,-0.8){8} \put(4.3,-0.8){9} 

\put(0,4){\p} \put(2,4){\p} \put(4,4){\p} 
\put(0,2){\p} \put(2,2){\p} \put(4,2){\p} 
\put(0,0){\p} \put(2,0){\p} \put(4,0){\p} 

\put(0,0){\vector(1,0){2}}
\put(0,0){\line(1,0){4}}
\put(0,2){\line(1,0){4}}
\put(0,4){\line(1,0){4}}
\put(0,0){\line(0,1){4}}
\put(2,0){\line(0,1){4}}
\put(4,0){\line(0,1){4}}

\put(0,0){\line(1,1){2.1}}
\put(0,2){\line(1,1){2.1}}
\put(2,0){\line(1,1){2.1}}
\put(2,4){\line(1,-1){2.1}}

\put(1.5,-1.9){$\large A$}
}

\put(1.5,0){


\put(-1,4.3){1} \put(1.3,4.3){2} \put(4.3,4.3){3} 
\put(-1,2.3){4} \put(1.3,2.3){5} \put(4.3,2.3){6} 
\put(-1,-0.8){7} \put(1.3,-0.8){8} \put(4.3,-0.8){9} 

\put(0,4){\p} \put(2,4){\p} \put(4,4){\p} 
\put(0,2){\p} \put(2,2){\p} \put(4,2){\p} 
\put(0,0){\p} \put(2,0){\p} \put(4,0){\p} 

\put(0,0){\vector(1,0){2}}
\put(0,0){\line(1,0){4}}
\put(0,2){\line(1,0){4}}
\put(0,4){\line(1,0){4}}
\put(0,0){\line(0,1){4}}
\put(0,0){\vector(0,1){2}}
\put(2,0){\line(0,1){4}}
\put(4,0){\line(0,1){4}}

\put(0,0){\line(1,1){2.1}}
\put(0,2){\line(1,1){2.1}}
\put(2,0){\line(1,1){2.1}}
\put(2,4){\line(1,-1){2.1}}

\put(1.5,-1.9){$\large B$}
}

\put(10,0){


\put(-1,4.3){1} \put(1.3,4.3){2} \put(4.3,4.3){3} 
\put(-1,2.3){4} \put(1.3,2.3){5} \put(4.3,2.3){6} 
\put(-1,-0.8){7} \put(1.3,-0.8){8} \put(4.3,-0.8){9} 

\put(0,4){\p} \put(2,4){\p} \put(4,4){\p} 
\put(0,2){\p} \put(2,2){\p} \put(4,2){\p} 
\put(0,0){\p} \put(2,0){\p} \put(4,0){\p} 

\put(0,0){\vector(1,0){2}}
\put(0,0){\line(1,0){4}}
\put(0,2){\line(1,0){4}}
\put(0,4){\line(1,0){4}}
\put(0,0){\line(0,1){4}}
\put(2,2){\vector(0,-1){2}}
\put(2,0){\line(0,1){4}}
\put(0,2){\vector(0,-1){2}}
\put(0,2){\vector(1,0){2}}
\put(4,0){\line(0,1){4}}

\put(0,0){\vector(1,1){2}}

\put(0,2){\line(1,1){2.1}}
\put(2,0){\line(1,1){2.1}}
\put(2,4){\line(1,-1){2.1}}

\put(1.5,-1.9){$\large C$}
}
\end{picture}
\end{center}

\caption{Partial orientations $A,B,C$ of $T_{1}$.}
\label{ABC_T1}
\end{figure}

\begin{itemize}

\item B 47 (NPOC $B$, see
Figure \ref{ABC_T1}), C 4785, B 57 (NPOC $C$, see Figure \ref{ABC_T1}), C
5786, C 4562, C 2685, S 4257.

\item MC $C$, C 4752, C 2586, C 7568, S 4265.

\item MC $B$, B 45 (NPOC $D$, see Figure \ref{DEFG_T1}), C 7458, C 785, B
52 (NPOC $E$, see Figure \ref{DEFG_T1}), C 7524, C~8526, C 6245, S 7865.

\item MC $E$, C 7425, C 2456, C 7865, S 2685.

\item MC $D$, C 5478, B 57 (NPOC $F$, see Figure \ref{DEFG_T1}), C 5786, C
5742, C 2456, S 5268.

\item MC $F$, B 56 (NPOC $G$, see Figure \ref{DEFG_T1}), C 8756, C 5862, C
4562, S 7524.

\item MC $G$, C 6542, C 2586, S 7524.
\end{itemize}

Hence, the graph $T_{1}$ is not word-representable.

\begin{figure}[h]

\begin{center}
\begin{picture}(6,5)

\put(-12,0){


\put(-1,4.3){1} \put(1.3,4.3){2} \put(4.3,4.3){3} 
\put(-1,2.3){4} \put(1.3,2.3){5} \put(4.3,2.3){6} 
\put(-1,-0.8){7} \put(1.3,-0.8){8} \put(4.3,-0.8){9} 

\put(0,4){\p} \put(2,4){\p} \put(4,4){\p} 
\put(0,2){\p} \put(2,2){\p} \put(4,2){\p} 
\put(0,0){\p} \put(2,0){\p} \put(4,0){\p} 

\put(0,0){\vector(1,0){2}}
\put(0,0){\line(1,0){4}}
\put(0,2){\line(1,0){4}}
\put(0,4){\line(1,0){4}}
\put(0,0){\line(0,1){4}}
\put(0,0){\vector(0,1){2}}
\put(2,0){\line(0,1){4}}
\put(2,2){\vector(-1,0){2}}
\put(4,0){\line(0,1){4}}

\put(0,0){\line(1,1){2.1}}
\put(0,2){\line(1,1){2.1}}
\put(2,0){\line(1,1){2.1}}
\put(2,4){\line(1,-1){2.1}}

\put(1.5,-1.9){$\large D$}
}

\put(-3.5,0){


\put(-1,4.3){1} \put(1.3,4.3){2} \put(4.3,4.3){3} 
\put(-1,2.3){4} \put(1.3,2.3){5} \put(4.3,2.3){6} 
\put(-1,-0.8){7} \put(1.3,-0.8){8} \put(4.3,-0.8){9} 

\put(0,4){\p} \put(2,4){\p} \put(4,4){\p} 
\put(0,2){\p} \put(2,2){\p} \put(4,2){\p} 
\put(0,0){\p} \put(2,0){\p} \put(4,0){\p} 

\put(0,0){\vector(1,0){2}}
\put(0,0){\line(1,0){4}}
\put(0,2){\line(1,0){4}}
\put(0,2){\vector(0,1){2}}
\put(0,4){\line(1,0){4}}
\put(2,4){\vector(0,-1){2}}
\put(0,0){\line(0,1){4}}
\put(0,0){\vector(0,1){2}}
\put(2,0){\line(0,1){4}}
\put(2,0){\vector(1,0){2}}
\put(4,0){\line(0,1){4}}
\put(0,2){\vector(1,0){2}}
\put(2,0){\vector(0,1){2}}

\put(0,0){\vector(1,1){2}}
\put(0,2){\line(1,1){2.1}}
\put(2,0){\line(1,1){2.1}}
\put(2,4){\line(1,-1){2.1}}

\put(1.5,-1.9){$\large E$}
}

\put(5,0){


\put(-1,4.3){1} \put(1.3,4.3){2} \put(4.3,4.3){3} 
\put(-1,2.3){4} \put(1.3,2.3){5} \put(4.3,2.3){6} 
\put(-1,-0.8){7} \put(1.3,-0.8){8} \put(4.3,-0.8){9} 

\put(0,4){\p} \put(2,4){\p} \put(4,4){\p} 
\put(0,2){\p} \put(2,2){\p} \put(4,2){\p} 
\put(0,0){\p} \put(2,0){\p} \put(4,0){\p} 

\put(0,0){\vector(1,0){2}}
\put(0,0){\line(1,0){4}}
\put(0,2){\line(1,0){4}}
\put(0,4){\line(1,0){4}}
\put(0,0){\line(0,1){4}}
\put(2,2){\vector(0,-1){2}}
\put(2,2){\vector(-1,0){2}}
\put(2,0){\line(0,1){4}}
\put(0,0){\vector(0,1){2}}
\put(4,0){\line(0,1){4}}

\put(0,0){\vector(1,1){2}}

\put(0,2){\line(1,1){2.1}}
\put(2,0){\line(1,1){2.1}}
\put(2,4){\line(1,-1){2.1}}

\put(1.5,-1.9){$\large F$}
}

\put(13.5,0){


\put(-1,4.3){1} \put(1.3,4.3){2} \put(4.3,4.3){3} 
\put(-1,2.3){4} \put(1.3,2.3){5} \put(4.3,2.3){6} 
\put(-1,-0.8){7} \put(1.3,-0.8){8} \put(4.3,-0.8){9} 

\put(0,4){\p} \put(2,4){\p} \put(4,4){\p} 
\put(0,2){\p} \put(2,2){\p} \put(4,2){\p} 
\put(0,0){\p} \put(2,0){\p} \put(4,0){\p} 

\put(0,0){\vector(1,0){2}}
\put(0,0){\line(1,0){4}}
\put(0,2){\line(1,0){4}}
\put(0,4){\line(1,0){4}}
\put(0,0){\line(0,1){4}}
\put(2,2){\vector(0,-1){2}}
\put(2,2){\vector(-1,0){2}}
\put(2,0){\line(0,1){4}}
\put(4,2){\vector(-1,0){2}}
\put(0,0){\vector(0,1){2}}
\put(4,0){\line(0,1){4}}

\put(0,0){\vector(1,1){2}}

\put(0,2){\line(1,1){2.1}}
\put(2,0){\line(1,1){2.1}}
\put(2,4){\line(1,-1){2.1}}

\put(1.5,-1.9){$\large G$}
}

\end{picture}
\end{center}

\caption{Partial orientations $D,E,F,G$ of $T_{1}$.}
\label{DEFG_T1}
\end{figure}

\noindent 
{\bf Orienting $T_{2}$.} Name $A$ the first copy of $T_{2}$ with single edge orientation
$12$ and carry out the following operations. 

\begin{figure}[h]
\begin{center}
\begin{picture}(6,5)


\put(-12,0){

\put(-1,4.3){1} \put(1.3,4.3){2} \put(4.3,4.3){3} 
\put(-1,2.3){4} \put(1.3,1.2){5} \put(4.3,2.3){6} 
\put(-1,-0.8){7} \put(1.3,-0.8){8} \put(4.3,-0.8){9} 

\put(0,4){\p} \put(2,4){\p} \put(4,4){\p} 
\put(0,2){\p} \put(2,2){\p} \put(4,2){\p} 
\put(0,0){\p} \put(2,0){\p} \put(4,0){\p} 

\put(0,0){\line(1,0){4}}
\put(0,2){\line(1,0){4}}
\put(0,4){\line(1,0){4}}
\put(0,4){\vector(1,0){2}}
\put(0,0){\line(0,1){4}}
\put(2,0){\line(0,1){4}}
\put(4,0){\line(0,1){4}}

\put(0,2){\line(1,-1){2.1}}
\put(0,4){\line(1,-1){4}}
\put(2,2){\line(1,1){2.1}}

\put(1.5,-1.9){$\large A$}

}

\put(-3.5,0){

\put(-1,4.3){1} \put(1.3,4.3){2} \put(4.3,4.3){3} 
\put(-1,2.3){4} \put(1.3,1.2){5} \put(4.3,2.3){6} 
\put(-1,-0.8){7} \put(1.3,-0.8){8} \put(4.3,-0.8){9} 

\put(0,4){\p} \put(2,4){\p} \put(4,4){\p} 
\put(0,2){\p} \put(2,2){\p} \put(4,2){\p} 
\put(0,0){\p} \put(2,0){\p} \put(4,0){\p} 

\put(0,0){\line(1,0){4}}
\put(0,2){\line(1,0){4}}
\put(0,2){\vector(0,1){2}}
\put(0,4){\line(1,0){4}}
\put(0,4){\vector(1,0){2}}
\put(0,0){\line(0,1){4}}
\put(2,0){\line(0,1){4}}
\put(4,0){\line(0,1){4}}

\put(0,2){\line(1,-1){2.1}}
\put(0,4){\line(1,-1){4}}
\put(2,2){\line(1,1){2.1}}

\put(1.5,-1.9){$\large B$}

}


\put(5,0){

\put(-1,4.3){1} \put(1.3,4.3){2} \put(4.3,4.3){3} 
\put(-1,2.3){4} \put(1.3,1.2){5} \put(4.3,2.3){6} 
\put(-1,-0.8){7} \put(1.3,-0.8){8} \put(4.3,-0.8){9} 

\put(0,4){\p} \put(2,4){\p} \put(4,4){\p} 
\put(0,2){\p} \put(2,2){\p} \put(4,2){\p} 
\put(0,0){\p} \put(2,0){\p} \put(4,0){\p} 

\put(0,0){\line(1,0){4}}
\put(0,2){\line(1,0){4}}
\put(0,4){\line(1,0){4}}
\put(0,4){\vector(1,0){2}}
\put(0,4){\vector(0,-1){2}}
\put(0,0){\line(0,1){4}}
\put(2,0){\line(0,1){4}}
\put(2,0){\vector(-1,1){2}}
\put(4,0){\line(0,1){4}}

\put(0,2){\line(1,-1){2.1}}
\put(0,4){\line(1,-1){4}}
\put(2,2){\line(1,1){2.1}}

\put(1.5,-1.9){$\large C$}

}

\put(13.5,0){

\put(-1,4.3){1} \put(1.3,4.3){2} \put(4.3,4.3){3} 
\put(-1,2.3){4} \put(1.3,1.2){5} \put(4.3,2.3){6} 
\put(-1,-0.8){7} \put(1.3,-0.8){8} \put(4.3,-0.8){9} 

\put(0,4){\p} \put(2,4){\p} \put(4,4){\p} 
\put(0,2){\p} \put(2,2){\p} \put(4,2){\p} 
\put(0,0){\p} \put(2,0){\p} \put(4,0){\p} 

\put(0,0){\line(1,0){4}}
\put(0,2){\line(1,0){4}}
\put(0,4){\line(1,0){4}}
\put(0,4){\vector(1,0){2}}
\put(0,4){\vector(0,-1){2}}
\put(0,0){\line(0,1){4}}
\put(2,0){\line(0,1){4}}
\put(4,0){\line(0,1){4}}

\put(0,2){\vector(1,-1){2}}
\put(0,4){\line(1,-1){4}}
\put(0,4){\vector(1,-1){2}}
\put(2,2){\line(1,1){2.1}}
\put(2,2){\vector(0,-1){2}}
\put(4,4){\vector(-1,-1){2}}

\put(1.5,-1.9){$\large D$}

}

\end{picture}
\end{center}

\caption{Partial orientations $A,B,C,D$ of $T_{2}$.}
\label{ABCD_T2}
\end{figure}

\begin{figure}[h]

\begin{center}
\begin{picture}(6,5)


\put(-12,0){

\put(-1,4.3){1} \put(1.3,4.3){2} \put(4.3,4.3){3} 
\put(-1,2.3){4} \put(1.3,1.2){5} \put(4.3,2.3){6} 
\put(-1,-0.8){7} \put(1.3,-0.8){8} \put(4.3,-0.8){9} 

\put(0,4){\p} \put(2,4){\p} \put(4,4){\p} 
\put(0,2){\p} \put(2,2){\p} \put(4,2){\p} 
\put(0,0){\p} \put(2,0){\p} \put(4,0){\p} 

\put(0,0){\line(1,0){4}}
\put(0,2){\line(1,0){4}}
\put(0,4){\line(1,0){4}}
\put(0,4){\vector(1,0){2}}
\put(0,4){\vector(0,-1){2}}
\put(0,0){\line(0,1){4}}
\put(2,0){\line(0,1){4}}
\put(2,4){\vector(1,0){2}}
\put(4,0){\line(0,1){4}}
\put(4,2){\vector(-1,0){2}}

\put(0,2){\vector(1,-1){2}}
\put(0,4){\line(1,-1){4}}
\put(0,4){\vector(1,-1){2}}
\put(2,2){\line(1,1){2.1}}
\put(2,2){\vector(1,1){2}}
\put(2,2){\vector(0,-1){2}}

\put(1.5,-1.9){$\large E$}

}

\put(-3.5,0){

\put(-1,4.3){1} \put(1.3,4.3){2} \put(4.3,4.3){3} 
\put(-1,2.3){4} \put(1.3,1.2){5} \put(4.3,2.3){6} 
\put(-1,-0.8){7} \put(1.3,-0.8){8} \put(4.3,-0.8){9} 

\put(0,4){\p} \put(2,4){\p} \put(4,4){\p} 
\put(0,2){\p} \put(2,2){\p} \put(4,2){\p} 
\put(0,0){\p} \put(2,0){\p} \put(4,0){\p} 

\put(0,0){\line(1,0){4}}
\put(0,2){\line(1,0){4}}
\put(0,4){\line(1,0){4}}
\put(0,4){\vector(1,0){2}}
\put(0,4){\vector(0,-1){2}}
\put(0,0){\line(0,1){4}}
\put(2,0){\line(0,1){4}}
\put(4,0){\line(0,1){4}}
\put(2,4){\vector(1,0){2}}

\put(0,2){\vector(1,-1){2}}
\put(0,4){\line(1,-1){4}}
\put(0,4){\vector(1,-1){2}}
\put(2,2){\vector(1,1){2}}
\put(2,2){\vector(-1,0){2}}
\put(2,2){\vector(1,0){2}}
\put(2,2){\vector(0,-1){2}}

\put(1.5,-1.9){$\large F$}

}

\put(5,0){

\put(-1,4.3){1} \put(1.3,4.3){2} \put(4.3,4.3){3} 
\put(-1,2.3){4} \put(1.3,1.2){5} \put(4.3,2.3){6} 
\put(-1,-0.8){7} \put(1.3,-0.8){8} \put(4.3,-0.8){9} 

\put(0,4){\p} \put(2,4){\p} \put(4,4){\p} 
\put(0,2){\p} \put(2,2){\p} \put(4,2){\p} 
\put(0,0){\p} \put(2,0){\p} \put(4,0){\p} 

\put(0,0){\line(1,0){4}}
\put(0,2){\line(1,0){4}}
\put(0,4){\line(1,0){4}}
\put(0,4){\vector(1,0){2}}
\put(0,4){\vector(0,-1){2}}
\put(0,0){\line(0,1){4}}
\put(2,0){\line(0,1){4}}
\put(4,0){\line(0,1){4}}

\put(0,2){\vector(1,-1){2}}
\put(0,4){\line(1,-1){4}}
\put(0,4){\vector(1,-1){2}}
\put(2,2){\line(1,1){2.1}}
\put(2,2){\vector(1,-1){2}}
\put(2,2){\vector(0,-1){2}}
\put(4,4){\vector(-1,-1){2}}
\put(4,4){\vector(-1,0){2}}

\put(1.5,-1.9){$\large G$}

}

\put(13.5,0){

\put(-1,4.3){1} \put(1.3,4.3){2} \put(4.3,4.3){3} 
\put(-1,2.3){4} \put(1.3,1.2){5} \put(4.3,2.3){6} 
\put(-1,-0.8){7} \put(1.3,-0.8){8} \put(4.3,-0.8){9} 

\put(0,4){\p} \put(2,4){\p} \put(4,4){\p} 
\put(0,2){\p} \put(2,2){\p} \put(4,2){\p} 
\put(0,0){\p} \put(2,0){\p} \put(4,0){\p} 

\put(0,0){\line(1,0){4}}
\put(0,2){\line(1,0){4}}
\put(0,4){\line(1,0){4}}
\put(0,4){\vector(1,0){2}}
\put(0,4){\vector(0,-1){2}}
\put(0,0){\line(0,1){4}}
\put(2,0){\line(0,1){4}}
\put(4,0){\line(0,1){4}}
\put(4,2){\vector(0,-1){2}}

\put(0,2){\vector(1,-1){2}}
\put(0,4){\line(1,-1){4}}
\put(0,4){\vector(1,-1){2}}
\put(2,2){\line(1,1){2.1}}
\put(2,2){\vector(1,-1){2}}
\put(2,2){\vector(0,1){2}}
\put(2,2){\vector(0,-1){2}}
\put(4,4){\vector(-1,-1){2}}
\put(4,4){\vector(-1,0){2}}
\put(4,4){\vector(0,-1){2}}

\put(1.5,-1.9){$\large H$}

}

\end{picture}
\end{center}

\caption{Partial orientations $E,F,G,H$ of $T_{2}$.}
\label{EFGH_T2}
\end{figure}

\begin{itemize}

\item B 14 (NPOC $B$, see
Figure \ref{ABCD_T2}), B 48 (NPOC $C$, see Figure \ref{ABCD_T2}), C 1485,
B 53 (NPOC $D$, see Figure \ref{ABCD_T2}), C 1532, B 56 (NPOC $E$,
see Figure \ref{EFGH_T2}), B 45 (NPOC $F$, see Figure \ref{EFGH_T2}), C
1452, C 2563, C~5369, C 4598, S 5896.

\item MC $F$, C 4598, C 1254, C 2365, C 5369, S 5698.

\item MC $E$, C 2365, C 1254, C 5698, C 5963, S 4598.

\item MC $D$, C 1235, B 95 (NPOC $G$, see Figure \ref{EFGH_T2}), C 4598,
C 4587, C 1254, C 2365, C 3695, S 9658.

\item MC $G$, C 3596, B 25 (NPOC $H$, see Figure \ref{EFGH_T2}), C 1254,
C 4598, C 5698, S 3256.

\item MC $H$, C 1254, C 4598, C 5698, S 3652.

\item MC $C$, B 25 (NPOC $I$, see Figure \ref{IJKL_T2}), C 1254, C 485,
C 1584, C 4598, B 23 (NPOC $J$, see Figure \ref{IJKL_T2}), C 1532, C
9536, C 8965, S 2563.

\item MC $J$, C 2365, C 5698, C 1532, S 3695.

\item MC $I$, C 1254, B 58 (NPOC $K$, see Figure \ref{IJKL_T2}), C 1485,
C 5123, B 56 (NPOC $L$, see Figure \ref{IJKL_T2}), C~5236, C 5369, C
5698, S 5984.

\item MC $L$, C 2365, C 5896, C 5963, S 5984.

\item MC $K$, C 1485, B 23 (NPOC $M$, see Figure \ref{MNOP_T2}), C 1235,
C 5236, C 8569, C 5963, S 8954.

\item MC $M$, C 1235, B 59 (NPOC $N$, see Figure \ref{MNOP_T2}), C 3596,
C 3652, C 5984, S 8569.

\item MC $N$, C 9548, C 9856, C 9536, S 3652.

\item MC $B$, C 1254, B 15 (NPOC $O$, see Figure \ref{MNOP_T2}), C 4158,
B 98 (NPOC $P$, see Figure \ref{MNOP_T2}), C 4598, C 9856, C 2365, C
3695, S 1532.

\item MC $P$, C 4598, B 53 (NPOC $Q$, see Figure \ref{QR_T2}), C 1235,
C 2365, C 5698, S 5963. 

\item MC $Q$, C 3596, C 3256, S 8569. 

\item MC $O$, C 1235, B 95 (NPOC $R$, see Figure \ref{QR_T2}), C 3695,
C 2365, C 9658, C 4598, S 4851.

\item MC $R$, C 4598, C 1485, C 5896, C 5369, S 5632.
\end{itemize}

Hence, the graph $T_{2}$ is not word-representable. \end{proof}

\begin{figure}[h]

\begin{center}
\begin{picture}(6,5)


\put(-12,0){

\put(-1,4.3){1} \put(1.3,4.3){2} \put(4.3,4.3){3} 
\put(-1,2.3){4} \put(1.3,1.2){5} \put(4.3,2.3){6} 
\put(-1,-0.8){7} \put(1.3,-0.8){8} \put(4.3,-0.8){9} 

\put(0,4){\p} \put(2,4){\p} \put(4,4){\p} 
\put(0,2){\p} \put(2,2){\p} \put(4,2){\p} 
\put(0,0){\p} \put(2,0){\p} \put(4,0){\p} 

\put(0,0){\line(1,0){4}}
\put(0,2){\line(1,0){4}}
\put(0,4){\line(1,0){4}}
\put(0,4){\vector(1,0){2}}
\put(0,4){\vector(0,-1){2}}
\put(0,0){\line(0,1){4}}
\put(2,0){\line(0,1){4}}
\put(2,0){\vector(-1,1){2}}
\put(4,0){\line(0,1){4}}

\put(0,2){\line(1,-1){2.1}}
\put(0,4){\line(1,-1){4}}
\put(2,2){\line(1,1){2.1}}
\put(2,2){\vector(0,1){2}}

\put(1.5,-1.9){$\large I$}

}

\put(-3.5,0){

\put(-1,4.3){1} \put(1.3,4.3){2} \put(4.3,4.3){3} 
\put(-1,2.3){4} \put(1.3,1.2){5} \put(4.3,2.3){6} 
\put(-1,-0.8){7} \put(1.3,-0.8){8} \put(4.3,-0.8){9} 

\put(0,4){\p} \put(2,4){\p} \put(4,4){\p} 
\put(0,2){\p} \put(2,2){\p} \put(4,2){\p} 
\put(0,0){\p} \put(2,0){\p} \put(4,0){\p} 

\put(0,0){\line(1,0){4}}
\put(0,2){\line(1,0){4}}
\put(0,2){\vector(1,0){2}}
\put(0,4){\line(1,0){4}}
\put(0,4){\vector(1,0){2}}
\put(0,4){\vector(0,-1){2}}
\put(0,0){\line(0,1){4}}
\put(2,0){\line(0,1){4}}
\put(2,0){\vector(-1,1){2}}
\put(2,0){\vector(0,1){2}}
\put(2,0){\vector(1,0){2}}
\put(4,0){\line(0,1){4}}
\put(4,0){\vector(-1,1){2}}
\put(2,4){\vector(0,-1){2}}

\put(0,2){\line(1,-1){2.1}}
\put(0,4){\line(1,-1){4}}
\put(0,4){\vector(1,-1){2}}
\put(2,2){\line(1,1){2.1}}
\put(4,4){\vector(-1,0){2}}

\put(1.5,-1.9){$\large J$}

}

\put(5,0){

\put(-1,4.3){1} \put(1.3,4.3){2} \put(4.3,4.3){3} 
\put(-1,2.3){4} \put(1.3,1.2){5} \put(4.3,2.3){6} 
\put(-1,-0.8){7} \put(1.3,-0.8){8} \put(4.3,-0.8){9} 

\put(0,4){\p} \put(2,4){\p} \put(4,4){\p} 
\put(0,2){\p} \put(2,2){\p} \put(4,2){\p} 
\put(0,0){\p} \put(2,0){\p} \put(4,0){\p} 

\put(0,0){\line(1,0){4}}
\put(0,2){\line(1,0){4}}
\put(0,4){\line(1,0){4}}
\put(0,4){\vector(1,0){2}}
\put(0,4){\vector(0,-1){2}}
\put(0,0){\line(0,1){4}}
\put(2,0){\line(0,1){4}}
\put(2,0){\vector(0,1){2}}
\put(2,0){\vector(-1,1){2}}
\put(4,0){\line(0,1){4}}

\put(0,2){\line(1,-1){2.1}}
\put(0,4){\line(1,-1){4}}
\put(2,2){\line(1,1){2.1}}
\put(2,2){\vector(-1,0){2}}
\put(2,2){\vector(0,1){2}}

\put(1.5,-1.9){$\large K$}

}

\put(13.5,0){

\put(-1,4.3){1} \put(1.3,4.3){2} \put(4.3,4.3){3} 
\put(-1,2.3){4} \put(1.3,1.2){5} \put(4.3,2.3){6} 
\put(-1,-0.8){7} \put(1.3,-0.8){8} \put(4.3,-0.8){9} 

\put(0,4){\p} \put(2,4){\p} \put(4,4){\p} 
\put(0,2){\p} \put(2,2){\p} \put(4,2){\p} 
\put(0,0){\p} \put(2,0){\p} \put(4,0){\p} 

\put(0,0){\line(1,0){4}}
\put(0,2){\line(1,0){4}}
\put(0,4){\line(1,0){4}}
\put(0,4){\vector(1,0){2}}
\put(0,4){\vector(0,-1){2}}
\put(0,0){\line(0,1){4}}
\put(2,0){\line(0,1){4}}
\put(2,0){\vector(-1,1){2}}
\put(4,0){\line(0,1){4}}
\put(4,2){\vector(-1,0){2}}

\put(0,2){\line(1,-1){2.1}}
\put(2,2){\vector(0,1){2}}
\put(2,2){\vector(-1,1){2}}
\put(2,2){\vector(-1,0){2}}
\put(2,2){\vector(0,-1){2}}
\put(0,4){\line(1,-1){4}}
\put(2,2){\vector(1,1){2}}
\put(4,4){\vector(-1,0){2}}

\put(1.5,-1.9){$\large L$}

}

\end{picture}
\end{center}

\caption{Partial orientations $I,J,K,L$ of $T_{2}$.}
\label{IJKL_T2}
\end{figure}

\begin{figure}[h]

\begin{center}
\begin{picture}(6,5)


\put(-12,0){

\put(-1,4.3){1} \put(1.3,4.3){2} \put(4.3,4.3){3} 
\put(-1,2.3){4} \put(1.3,1.2){5} \put(4.3,2.3){6} 
\put(-1,-0.8){7} \put(1.3,-0.8){8} \put(4.3,-0.8){9} 

\put(0,4){\p} \put(2,4){\p} \put(4,4){\p} 
\put(0,2){\p} \put(2,2){\p} \put(4,2){\p} 
\put(0,0){\p} \put(2,0){\p} \put(4,0){\p} 

\put(0,0){\line(1,0){4}}
\put(0,2){\line(1,0){4}}
\put(0,4){\line(1,0){4}}
\put(0,4){\vector(1,0){2}}
\put(0,4){\vector(0,-1){2}}
\put(0,0){\line(0,1){4}}
\put(2,0){\line(0,1){4}}
\put(2,0){\vector(-1,1){2}}
\put(2,0){\vector(0,1){2}}
\put(4,0){\line(0,1){4}}

\put(0,2){\line(1,-1){2.1}}
\put(0,4){\line(1,-1){4}}
\put(0,4){\vector(1,-1){2}}
\put(2,2){\line(1,1){2.1}}
\put(2,2){\vector(-1,0){2}}
\put(2,2){\vector(0,1){2}}
\put(4,4){\vector(-1,0){2}}

\put(1.5,-1.9){$\large M$}

}

\put(-3.5,0){

\put(-1,4.3){1} \put(1.3,4.3){2} \put(4.3,4.3){3} 
\put(-1,2.3){4} \put(1.3,1.2){5} \put(4.3,2.3){6} 
\put(-1,-0.8){7} \put(1.3,-0.8){8} \put(4.3,-0.8){9} 

\put(0,4){\p} \put(2,4){\p} \put(4,4){\p} 
\put(0,2){\p} \put(2,2){\p} \put(4,2){\p} 
\put(0,0){\p} \put(2,0){\p} \put(4,0){\p} 

\put(0,0){\line(1,0){4}}
\put(0,2){\line(1,0){4}}
\put(0,4){\line(1,0){4}}
\put(0,4){\vector(1,0){2}}
\put(0,4){\vector(0,-1){2}}
\put(0,0){\line(0,1){4}}
\put(2,0){\line(0,1){4}}
\put(2,0){\vector(-1,1){2}}
\put(2,0){\vector(0,1){2}}
\put(4,0){\line(0,1){4}}
\put(4,0){\vector(-1,1){2}}

\put(0,2){\line(1,-1){2.1}}
\put(0,4){\line(1,-1){4}}
\put(0,4){\vector(1,-1){2}}
\put(2,2){\line(1,1){2.1}}
\put(2,2){\vector(-1,0){2}}
\put(2,2){\vector(0,1){2}}
\put(4,4){\vector(-1,0){2}}
\put(4,4){\vector(-1,-1){2}}

\put(1.5,-1.9){$\large N$}

}

\put(5,0){

\put(-1,4.3){1} \put(1.3,4.3){2} \put(4.3,4.3){3} 
\put(-1,2.3){4} \put(1.3,1.2){5} \put(4.3,2.3){6} 
\put(-1,-0.8){7} \put(1.3,-0.8){8} \put(4.3,-0.8){9} 

\put(0,4){\p} \put(2,4){\p} \put(4,4){\p} 
\put(0,2){\p} \put(2,2){\p} \put(4,2){\p} 
\put(0,0){\p} \put(2,0){\p} \put(4,0){\p} 

\put(0,0){\line(1,0){4}}
\put(0,2){\line(1,0){4}}
\put(0,2){\vector(0,1){2}}
\put(0,2){\vector(1,0){2}}
\put(0,4){\line(1,0){4}}
\put(0,4){\vector(1,0){2}}
\put(0,0){\line(0,1){4}}
\put(2,0){\line(0,1){4}}
\put(4,0){\line(0,1){4}}

\put(0,2){\line(1,-1){2.1}}
\put(0,4){\line(1,-1){4}}
\put(2,2){\line(1,1){2.1}}
\put(2,2){\vector(0,1){2}}
\put(2,2){\vector(-1,1){2}}

\put(1.5,-1.9){$\large O$}

}

\put(13.5,0){

\put(-1,4.3){1} \put(1.3,4.3){2} \put(4.3,4.3){3} 
\put(-1,2.3){4} \put(1.3,1.2){5} \put(4.3,2.3){6} 
\put(-1,-0.8){7} \put(1.3,-0.8){8} \put(4.3,-0.8){9} 

\put(0,4){\p} \put(2,4){\p} \put(4,4){\p} 
\put(0,2){\p} \put(2,2){\p} \put(4,2){\p} 
\put(0,0){\p} \put(2,0){\p} \put(4,0){\p} 

\put(0,0){\line(1,0){4}}
\put(0,2){\line(1,0){4}}
\put(0,2){\vector(0,1){2}}
\put(0,2){\vector(1,0){2}}
\put(0,4){\line(1,0){4}}
\put(0,4){\vector(1,0){2}}
\put(0,0){\line(0,1){4}}
\put(2,0){\line(0,1){4}}
\put(2,0){\vector(1,0){2}}
\put(2,0){\vector(0,1){2}}
\put(4,0){\line(0,1){4}}

\put(0,2){\vector(1,-1){2}}
\put(0,4){\line(1,-1){4}}
\put(0,4){\vector(1,-1){2}}
\put(2,2){\line(1,1){2.1}}
\put(2,2){\vector(0,1){2}}

\put(1.5,-1.9){$\large P$}

}

\end{picture}
\end{center}

\caption{Partial orientations $M,N,O,P$ of $T_{2}$.}
\label{MNOP_T2}
\end{figure}

\begin{figure}[h]

\begin{center}
\begin{picture}(6,5)


\put(-5,0){

\put(-1,4.3){1} \put(1.3,4.3){2} \put(4.3,4.3){3} 
\put(-1,2.3){4} \put(1.3,1.2){5} \put(4.3,2.3){6} 
\put(-1,-0.8){7} \put(1.3,-0.8){8} \put(4.3,-0.8){9} 

\put(0,4){\p} \put(2,4){\p} \put(4,4){\p} 
\put(0,2){\p} \put(2,2){\p} \put(4,2){\p} 
\put(0,0){\p} \put(2,0){\p} \put(4,0){\p} 

\put(0,0){\line(1,0){4}}
\put(0,2){\line(1,0){4}}
\put(0,4){\line(1,0){4}}
\put(0,4){\vector(1,0){2}}
\put(0,0){\line(0,1){4}}
\put(2,0){\line(0,1){4}}
\put(2,0){\vector(1,0){2}}
\put(2,0){\vector(0,1){2}}
\put(4,0){\line(0,1){4}}

\put(0,2){\vector(1,-1){2}}
\put(0,2){\vector(1,0){2}}
\put(0,2){\vector(0,1){2}}
\put(0,4){\line(1,-1){4}}
\put(0,4){\vector(1,-1){2}}
\put(2,2){\line(1,1){2.1}}
\put(2,2){\vector(0,1){2}}
\put(2,2){\vector(1,-1){2}}
\put(4,4){\vector(-1,-1){2}}

\put(1.5,-1.9){$\large Q$}

}


\put(7,0){

\put(-1,4.3){1} \put(1.3,4.3){2} \put(4.3,4.3){3} 
\put(-1,2.3){4} \put(1.3,1.2){5} \put(4.3,2.3){6} 
\put(-1,-0.8){7} \put(1.3,-0.8){8} \put(4.3,-0.8){9} 

\put(0,4){\p} \put(2,4){\p} \put(4,4){\p} 
\put(0,2){\p} \put(2,2){\p} \put(4,2){\p} 
\put(0,0){\p} \put(2,0){\p} \put(4,0){\p} 

\put(0,0){\line(1,0){4}}
\put(0,2){\line(1,0){4}}
\put(0,2){\vector(1,0){2}}
\put(0,2){\vector(0,1){2}}
\put(0,4){\line(1,0){4}}
\put(0,4){\vector(1,0){2}}
\put(0,0){\line(0,1){4}}
\put(2,0){\line(0,1){4}}
\put(4,0){\line(0,1){4}}

\put(0,2){\line(1,-1){2.1}}
\put(0,4){\line(1,-1){4}}
\put(2,2){\line(1,1){2.1}}
\put(2,2){\vector(1,1){2}}
\put(2,2){\vector(-1,1){2}}
\put(2,2){\vector(1,-1){2}}
\put(2,2){\vector(0,1){2}}
\put(4,4){\vector(-1,0){2}}

\put(1.5,-1.9){$\large R$}

}

\end{picture}
\end{center}

\caption{Partial orientations $Q,R$ of $T_{2}$.}
\label{QR_T2}
\end{figure}

\section{Triangulations of a polyomino}\label{sec3}

In this section we first consider triangulations of a grid graph, and then generalize our arguments to triangulations of a convex polyomino. The main theorem in this paper, Theorem~\ref{main-thm}, is to be proved in Subsection~\ref{tr-conv-polyomino}. We will also show in Subsection~\ref{triangArbitrary} that Theorem~\ref{main-thm} does not hold in the case of an arbitrary polyomino.  

\subsection{Triangulations of a grid graph}

Let $S$ be the set of eight graphs formed by rotations of $T_1$ and $T_2$ in Figure~\ref{non-repr-triang} by degrees multiple to $90\,^{\circ}$.

\begin{lemma}\label{lemma-grid} A triangulation $T$ of a grid graph is $3$-colorable if and only if it does not contain a graph from $S$ as an induced subgraph.  \end{lemma}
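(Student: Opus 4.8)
The plan is to bypass any explicit colouring and instead read $3$-colourability off a local parity condition, using the classical fact that a $2$-connected plane graph all of whose bounded faces are triangles is properly $3$-colourable if and only if every \emph{interior} vertex (one not lying on the outer face) has even degree. A triangulation $T$ of a rectangular grid graph is exactly such a near-triangulation: each unit cell is split into two triangles, the outer face is the bounding rectangle, and the graph is $2$-connected. So I would first record this criterion, either by citing it or by giving the short standard argument: in a $3$-coloured triangulation the triangles around an interior vertex $v$ alternate between the two local orientations $0\to1\to2$ and $0\to2\to1$, which forces $\deg(v)$ even, and conversely even interior degrees allow one to $2$-colour the bounded faces and integrate this into a proper $3$-colouring.

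The core of the argument is then a bijection between interior vertices and $2\times2$ blocks of cells. Every interior vertex $v$ is the centre of a unique $2\times2$ block of four cells, and conversely; moreover the nine vertices of such a block induce in $T$ precisely the twelve grid edges together with the four cell-diagonals, i.e.\ one of the sixteen triangulations of the $3\times3$ grid. I would compute $\deg(v)=4+d(v)$, where $d(v)\in\{0,1,2,3,4\}$ counts the four surrounding cells whose diagonal is incident to $v$ (a cell contributes iff its diagonal joins $v$ to the opposite corner of that cell). Hence $\deg(v)$ is odd iff $d(v)\in\{1,3\}$.

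It remains to identify the blocks with $d(v)$ odd. There are $\binom{4}{1}=4$ blocks with $d(v)=1$ and $\binom{4}{3}=4$ with $d(v)=3$, eight in all; and inspection of Figure~\ref{non-repr-triang} shows that in $T_1$ exactly one cell-diagonal meets the centre (vertex $5$) while in $T_2$ exactly three do. Thus these eight blocks are precisely the $90^{\circ}$-rotations of $T_1$ and $T_2$, that is, the set $S$. Combining the steps: $T$ has an interior vertex of odd degree iff some $2\times2$ block of $T$ is a member of $S$ occurring, block-aligned, as an induced subgraph. By the even-degree criterion the former is equivalent to $T$ being non-$3$-colourable, which yields the contrapositive of one direction; the other direction is immediate, since a proper $3$-colouring of $T$ restricts to one of any induced subgraph, and the members of $S$, being isomorphic to $T_1$ or $T_2$, are not $3$-colourable.

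The main obstacle I anticipate is justifying the even-degree criterion cleanly in the disk (near-triangulation) setting rather than the usual sphere setting, together with the bookkeeping that the only way a member of $S$ arises as an induced subgraph is the block-aligned one. For the direction actually needed (not $3$-colourable $\Rightarrow$ contains a member of $S$) this is automatic, because I exhibit a concrete block-aligned copy; the reverse direction only uses that members of $S$ are non-$3$-colourable, so no enumeration of exotic copies is required. Should one prefer to avoid invoking the criterion, the same conclusion can be reached more laboriously by $3$-colouring $T$ strip by strip from the bottom: each horizontal strip is a maximal outerplanar graph and hence $3$-coloured essentially uniquely, and one verifies that the forced colouring propagates from one strip to the next exactly when every intervening $2\times2$ block avoids $S$.
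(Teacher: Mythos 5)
Your argument is correct, but it takes a genuinely different route from the paper. The paper proves the hard direction by a greedy colouring: it colours $T$ row by row, left to right, observes that the first use of colour 4 can occur only in one of three local configurations (Figure~\ref{conflict-situations}), and then in each case reconstructs enough of the surrounding $3\times 3$ block to exhibit an induced rotated copy of $T_1$ or $T_2$. You instead reduce everything to the Heawood-type parity criterion for triangulated disks (3-colourable iff every interior vertex has even degree), after which the lemma becomes a two-line count: an interior vertex is the centre of a unique $2\times 2$ block, its degree is $4+d(v)$ with $d(v)$ the number of incident cell-diagonals, and the eight configurations with $d(v)$ odd are exactly the rotations of $T_1$ (where $d=1$) and $T_2$ (where $d=3$), i.e.\ the set $S$; I checked these counts against Figure~\ref{non-repr-triang} and they are right, as is your observation that only the block-aligned copies are needed for the direction you prove, while the converse direction needs nothing beyond non-3-colourability of $T_1$ and $T_2$. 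What your approach buys is brevity, the elimination of the case analysis, and the cleaner intermediate characterization ``3-colourable iff all interior degrees are even''; it also transfers essentially verbatim to Lemma~\ref{lemma-conv-polyomino}, since a convex polyomino is simply connected and its interior vertices are again exactly the centres of $2\times 2$ blocks of cells, which would spare the paper the recolouring subcases there. What it costs is self-containedness: the sufficiency half of the parity criterion (even interior degrees imply 3-colourability) is the real content and must be proved or properly cited for near-triangulations of a disk, and it genuinely uses simple connectivity --- the paper's own example in Figure~\ref{4-color-repr-polyomino}, a non-3-colourable triangulation of a holed polyomino with no interior vertices at all, shows the criterion fails once holes are allowed, which is precisely why the paper's hypotheses stop at convex polyominoes. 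So do spell out (or reference) the face-2-colouring/$\mathbb{Z}_3$-integration proof of the criterion rather than leaving it as ``classical''; with that in place your proof is complete.
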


\begin{proof} If $T$ contains a graph from $S$ as an induced subgraph, then it is obviously not $3$-colorable. 

For the opposite directions, suppose that $T$ is not $3$-colorable. We note that fixing colors of the left-most top vertex in $T$ and the vertex right below it determines uniquely colours in the top two rows of $T$ (a row is a horizontal path) if we are to use colors in $\{1,2,3\}$ and keep all other vertices of $T$ uncolored.  We continue to color all other vertices of $T$, row by row, from left to right using any of the available colors in $\{1,2,3\}$. At some point, color 4 must be used ($T$ is not $3$-colorable). Let $v$ be the first vertex colored by 4. There are only three possible different situations when this can happen, which are presented in Figure~\ref{conflict-situations} (numbers in this figure are colors). In that figure, the shaded area indicates schematically already colored vertices of $T$, the question mark shows a still non-colored vertex, and the colors adjacent to $v$ are fixed in a particular way without loss of generality (we can re-name already used colors if needed). A particular property in all cases is that among the colors of neighbours of $v$, we meet all the colors in $\{1,2,3\}$. Also,  by our procedure, $v$ must be in row $i$ from above, where $i\geq 3$.\\

\begin{figure}[h]
\begin{center}
\begin{picture}(8,7)


\put(-12,0){

\put(-2.5,2.5){$C_1$=}

\put(2.5,1.5){{\tiny 1}} 
\put(2.5,4.1){{\tiny 2}} 
\put(5.2,1.5){{\tiny 4}} 
\put(5.2,3.5){{\tiny 3}} 

\put(3,2){\p} \put(5,2){\p} \put(3,4){\p} \put(5,4){\p} 

\put(0,0){\line(1,0){9}}
\put(0,6){\line(1,0){9}}
\put(0,2){\line(1,0){5}}
\put(3,4){\line(1,0){6}}
\put(3,2){\line(0,1){2}}
\put(5,2){\line(0,1){2}}
\put(3,4){\line(1,-1){2.1}}
\put(0,0){\line(0,1){6}}
\put(9,0){\line(0,1){6}}

\multiput(0,2)(0.5,0.5){8}{\line(1,1){0.2}}
\multiput(1,2)(0.5,0.5){8}{\line(1,1){0.2}}
\multiput(2,2)(0.5,0.5){2}{\line(1,1){0.2}}
\multiput(0,3)(0.5,0.5){6}{\line(1,1){0.2}}
\multiput(0,4)(0.5,0.5){4}{\line(1,1){0.2}}
\multiput(0,5)(0.5,0.5){2}{\line(1,1){0.2}}
\multiput(4,4)(0.5,0.5){4}{\line(1,1){0.2}}
\multiput(5,4)(0.5,0.5){4}{\line(1,1){0.2}}
\multiput(6,4)(0.5,0.5){4}{\line(1,1){0.2}}
\multiput(7,4)(0.5,0.5){4}{\line(1,1){0.2}}
\multiput(8,4)(0.5,0.5){2}{\line(1,1){0.2}}

}


\put(1,0){

\put(-2.5,2.5){$C_2$=}

\put(2.5,1.5){{\tiny 1}} 
\put(2.5,4.1){{\tiny 2}} 
\put(5.2,1.5){{\tiny 4}} 
\put(5.2,3.5){{\tiny 1}} 
\put(7.2,3.5){{\tiny 3}} 
\put(7.2,1.5){{\tiny ?}} 

\put(3,2){\p} \put(5,2){\p} \put(3,4){\p} \put(5,4){\p} \put(7,4){\p} \put(7,2){\p} 

\put(5,2){\line(1,1){2.1}}
\put(0,0){\line(1,0){9}}
\put(0,6){\line(1,0){9}}
\put(0,2){\line(1,0){7}}
\put(3,4){\line(1,0){6}}
\put(3,2){\line(0,1){2}}
\put(5,2){\line(0,1){2}}
\put(3,4){\line(1,-1){2.1}}
\put(0,0){\line(0,1){6}}
\put(9,0){\line(0,1){6}}
\put(7,2){\line(0,1){2}}

\multiput(0,2)(0.5,0.5){8}{\line(1,1){0.2}}
\multiput(1,2)(0.5,0.5){8}{\line(1,1){0.2}}
\multiput(2,2)(0.5,0.5){2}{\line(1,1){0.2}}
\multiput(0,3)(0.5,0.5){6}{\line(1,1){0.2}}
\multiput(0,4)(0.5,0.5){4}{\line(1,1){0.2}}
\multiput(0,5)(0.5,0.5){2}{\line(1,1){0.2}}
\multiput(4,4)(0.5,0.5){4}{\line(1,1){0.2}}
\multiput(5,4)(0.5,0.5){4}{\line(1,1){0.2}}
\multiput(6,4)(0.5,0.5){4}{\line(1,1){0.2}}
\multiput(7,4)(0.5,0.5){4}{\line(1,1){0.2}}
\multiput(8,4)(0.5,0.5){2}{\line(1,1){0.2}}

}


\put(14,0){

\put(-2.5,2.5){$C_3$=}

\put(2.5,1.5){{\tiny 1}} 
\put(2.5,4.1){{\tiny 3}} 
\put(5.2,1.5){{\tiny 4}} 
\put(5.2,3.5){{\tiny 2}} 
\put(7.2,3.5){{\tiny 3}} 
\put(7.2,1.5){{\tiny ?}} 

\put(3,2){\p} \put(5,2){\p} \put(3,4){\p} \put(5,4){\p} \put(7,4){\p} \put(7,2){\p} 

\put(5,2){\line(1,1){2.1}}
\put(0,0){\line(1,0){9}}
\put(0,6){\line(1,0){9}}
\put(0,2){\line(1,0){7}}
\put(3,4){\line(1,0){6}}
\put(3,2){\line(0,1){2}}
\put(5,2){\line(0,1){2}}
\put(3,2){\line(1,1){2.1}}
\put(0,0){\line(0,1){6}}
\put(9,0){\line(0,1){6}}
\put(7,2){\line(0,1){2}}

\multiput(0,2)(0.5,0.5){8}{\line(1,1){0.2}}
\multiput(1,2)(0.5,0.5){8}{\line(1,1){0.2}}
\multiput(2,2)(0.5,0.5){2}{\line(1,1){0.2}}
\multiput(0,3)(0.5,0.5){6}{\line(1,1){0.2}}
\multiput(0,4)(0.5,0.5){4}{\line(1,1){0.2}}
\multiput(0,5)(0.5,0.5){2}{\line(1,1){0.2}}
\multiput(4,4)(0.5,0.5){4}{\line(1,1){0.2}}
\multiput(5,4)(0.5,0.5){4}{\line(1,1){0.2}}
\multiput(6,4)(0.5,0.5){4}{\line(1,1){0.2}}
\multiput(7,4)(0.5,0.5){4}{\line(1,1){0.2}}
\multiput(8,4)(0.5,0.5){2}{\line(1,1){0.2}}

}

\end{picture}
\caption{Three possible cases of appearance of color 4 in coloring of $T$.} \label{conflict-situations}
\end{center}
\end{figure}
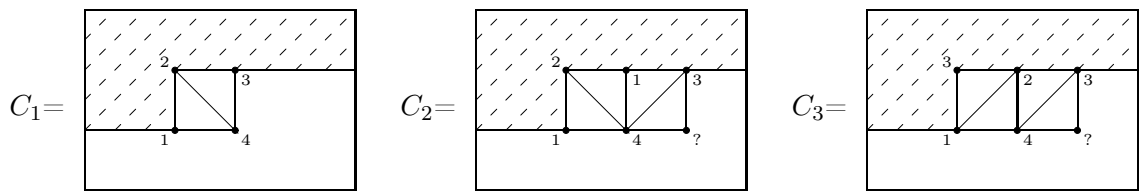

\noindent
{\bf Case 1: Triangulation $C_1$ in Figure~\ref{conflict-situations}}. Note that the vertex colored by 1 must be in column $i$ (from left to right), where $i\geq 2$; if the vertex would be in column 1, there would be no need to color it by $1$ in our procedure --- color $3$ could be used contradicting the assumption (we would be in conditions of Case 2 to be considered below). Thus, $T$ must contain, as an induced subgraph, a $3\times 3$ grid graph triangulation with the right-most bottom vertex being $v$. We have four possible subcases depending on the colors of the vertices indicated by white circles in Figure~\ref{case1-subcases}, which allows us, in each case, to partially recover the triangulation of the $3\times 3$ grid graph involved, as well as some of vertices' colors. However, in each of the cases, there is a unique way to complete the triangulation, namely, by joining the vertices colored by 1 and 3. Indeed, if the vertex colored by 2 is connected to a ? vertex, then the ? vertex must be colored by 4 contradicting the fact that $v$ was the first vertex colored by 4 in our colouring procedure. We see that in each case, the triangulation belongs to $S$.\\

\begin{figure}[h]
\begin{center}
\begin{picture}(10,4.5)

\put(-12,0){


\put(-1,4.3){3} \put(1.3,4.3){1} \put(4.3,4.3){?} 
\put(-1,2.3){1} \put(2.2,2.3){2} \put(4.3,2.3){3} 
\put(-1,-0.8){3} \put(1.3,-0.8){1} \put(4.3,-0.8){4} 

\put(0,4){\p} \put(1.8,3.8){$\circ$} \put(4,4){\p} 
\put(-0.2,1.8){$\circ$} \put(2,2){\p} \put(4,2){\p} 
\put(0,0){\p} \put(2,0){\p} \put(4,0){\p} 

\put(0,0){\line(1,0){4}}
\put(0,2){\line(1,0){4}}
\put(0,4){\line(1,0){4}}
\put(0,0){\line(0,1){4}}
\put(2,0){\line(0,1){4}}
\put(4,0){\line(0,1){4}}

\put(0,0){\line(1,1){2.1}}
\put(0,4){\line(1,-1){2.1}}
\put(2,2){\line(1,-1){2.1}}

}

\put(-3,0){


\put(-1,4.3){?} \put(1.3,4.3){1} \put(4.3,4.3){?} 
\put(-1,2.3){3} \put(1.3,2.3){2} \put(4.3,2.3){3} 
\put(-1,-0.8){?} \put(1.3,-0.8){1} \put(4.3,-0.8){4} 

\put(0,4){\p} \put(1.8,3.8){$\circ$} \put(4,4){\p} 
\put(-0.2,1.8){$\circ$} \put(2,2){\p} \put(4,2){\p} 
\put(0,0){\p} \put(2,0){\p} \put(4,0){\p} 

\put(0,0){\line(1,0){4}}
\put(0,2){\line(1,0){4}}
\put(0,4){\line(1,0){4}}
\put(0,0){\line(0,1){4}}
\put(2,0){\line(0,1){4}}
\put(4,0){\line(0,1){4}}

\put(2,2){\line(1,-1){2.1}}

}

\put(6,0){


\put(-1,4.3){?} \put(1.3,4.3){3} \put(4.3,4.3){1} 
\put(-1,2.3){1} \put(1.3,2.3){2} \put(4.3,2.3){3} 
\put(-1,-0.8){3} \put(1.3,-0.8){1} \put(4.3,-0.8){4} 

\put(0,4){\p} \put(1.8,3.8){$\circ$} \put(4,4){\p} 
\put(-0.2,1.8){$\circ$} \put(2,2){\p} \put(4,2){\p} 
\put(0,0){\p} \put(2,0){\p} \put(4,0){\p} 

\put(0,0){\line(1,0){4}}
\put(0,2){\line(1,0){4}}
\put(0,4){\line(1,0){4}}
\put(0,0){\line(0,1){4}}
\put(2,0){\line(0,1){4}}
\put(4,0){\line(0,1){4}}

\put(0,0){\line(1,1){2.1}}
\put(2,2){\line(1,1){2.1}}
\put(2,2){\line(1,-1){2.1}}

}

\put(15,0){


\put(-1,4.3){1} \put(1.3,4.3){3} \put(4.3,4.3){1} 
\put(-1,2.3){3} \put(1.3,1){2} \put(4.3,2.3){3} 
\put(-1,-0.8){?} \put(1.3,-0.8){1} \put(4.3,-0.8){4} 

\put(0,4){\p} \put(1.8,3.8){$\circ$} \put(4,4){\p} 
\put(-0.2,1.8){$\circ$} \put(2,2){\p} \put(4,2){\p} 
\put(0,0){\p} \put(2,0){\p} \put(4,0){\p} 

\put(0,0){\line(1,0){4}}
\put(0,2){\line(1,0){4}}
\put(0,4){\line(1,0){4}}
\put(0,0){\line(0,1){4}}
\put(2,0){\line(0,1){4}}
\put(4,0){\line(0,1){4}}

\put(2,2){\line(1,1){2.1}}
\put(0,4){\line(1,-1){2.1}}
\put(2,2){\line(1,-1){2.1}}

}

\end{picture}
\caption{Four subcases in Case 1 in Figure~\ref{conflict-situations}.} \label{case1-subcases}
\end{center}
\end{figure}
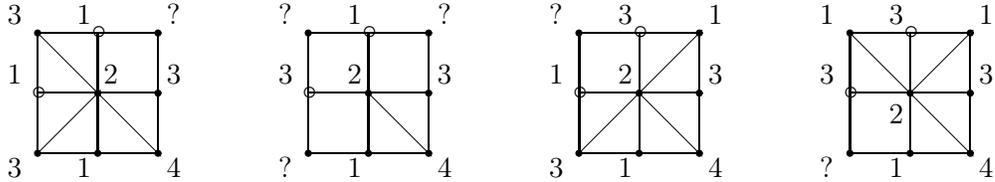

\noindent
{\bf Case 2: Triangulation $C_2$ in Figure~\ref{conflict-situations}}. In this case, $T$ must contain, as an induced subgraph, a $3\times 3$ grid graph triangulation with the bottom middle vertex being $v$. We have two possible subcases depending on the color of the vertex indicated by white circle in Figure~\ref{case2-subcases}, which allows us, in each case, to partially recover the triangulation of the $3\times 3$ grid graph involved, as well as some of vertices' colors. However, in each of the cases, there is a unique way to complete the triangulation, namely, by joining the vertices colored by 2 and 3. Indeed, if the center vertex colored by 1 is connected to the ? vertex, then the ? vertex must be colored by 4 contradicting the fact that $v$ was the first vertex colored by 4 in our colouring procedure. We see that in each case, the triangulation belongs to $S$.\\

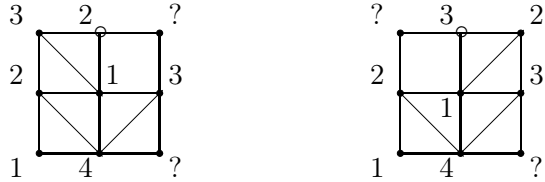
\begin{figure}[h]
\begin{center}
\begin{picture}(6,4.5)

\put(-5,0){


\put(-1,4.3){3} \put(1.3,4.3){2} \put(4.3,4.3){?} 
\put(-1,2.3){2} \put(2.2,2.3){1} \put(4.3,2.3){3} 
\put(-1,-0.8){1} \put(1.3,-0.8){4} \put(4.3,-0.8){?} 

\put(0,4){\p} \put(1.8,3.8){$\circ$} \put(4,4){\p} 
\put(0,2){\p} \put(2,2){\p} \put(4,2){\p} 
\put(0,0){\p} \put(2,0){\p} \put(4,0){\p} 

\put(0,0){\line(1,0){4}}
\put(0,2){\line(1,0){4}}
\put(0,4){\line(1,0){4}}
\put(0,0){\line(0,1){4}}
\put(2,0){\line(0,1){4}}
\put(4,0){\line(0,1){4}}

\put(0,2){\line(1,-1){2.1}}
\put(0,4){\line(1,-1){2.1}}
\put(2,0){\line(1,1){2.1}}

}

\put(7,0){


\put(-1,4.3){?} \put(1.3,4.3){3} \put(4.3,4.3){2} 
\put(-1,2.3){2} \put(1.3,1.2){1} \put(4.3,2.3){3} 
\put(-1,-0.8){1} \put(1.3,-0.8){4} \put(4.3,-0.8){?} 

\put(0,4){\p} \put(1.8,3.8){$\circ$} \put(4,4){\p} 
\put(0,2){\p} \put(2,2){\p} \put(4,2){\p} 
\put(0,0){\p} \put(2,0){\p} \put(4,0){\p} 

\put(0,0){\line(1,0){4}}
\put(0,2){\line(1,0){4}}
\put(0,4){\line(1,0){4}}
\put(0,0){\line(0,1){4}}
\put(2,0){\line(0,1){4}}
\put(4,0){\line(0,1){4}}

\put(0,2){\line(1,-1){2.1}}
\put(2,2){\line(1,1){2.1}}
\put(2,0){\line(1,1){2.1}}

}

\end{picture}
\caption{Two subcases in Case 2 in Figure~\ref{conflict-situations}.} \label{case2-subcases}
\end{center}
\end{figure}

\noindent
{\bf Case 3: Triangulation $C_3$ in Figure~\ref{conflict-situations}}. In this case, similarly to Case 2, $T$ must contain, as an induced subgraph, a $3\times 3$ grid graph triangulation with the bottom middle vertex being $v$. We have two possible subcases depending on the color of the vertex indicated by a white circle in Figure~\ref{case3-subcases}, which allows us in one case to recover partially the triangulation of the $3\times 3$ grid graph involved, while in the other case to do it completely obtaining as a result a triangulation in $S$. Also, we can recover some of vertices' colors. We now see that  in the partially recovered case, there is a unique way to complete the triangulation, namely, by joining the vertices colored by 1 and 3. Indeed, if the center vertex colored by 2 is connected to one of the ? vertices, then the ? vertex must be colored by 4 contradicting the fact that $v$ was the first vertex colored by 4 in our colouring procedure. We see that in any case, the triangulation belongs to~$S$. \end{proof}

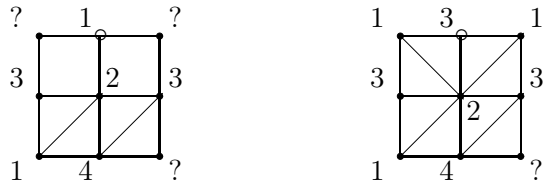
\begin{figure}[h]
\begin{center}
\begin{picture}(6,4.5)

\put(-5,0){


\put(-1,4.3){?} \put(1.3,4.3){1} \put(4.3,4.3){?} 
\put(-1,2.3){3} \put(2.2,2.3){2} \put(4.3,2.3){3} 
\put(-1,-0.8){1} \put(1.3,-0.8){4} \put(4.3,-0.8){?} 

\put(0,4){\p} \put(1.8,3.8){$\circ$} \put(4,4){\p} 
\put(0,2){\p} \put(2,2){\p} \put(4,2){\p} 
\put(0,0){\p} \put(2,0){\p} \put(4,0){\p} 

\put(0,0){\line(1,0){4}}
\put(0,2){\line(1,0){4}}
\put(0,4){\line(1,0){4}}
\put(0,0){\line(0,1){4}}
\put(2,0){\line(0,1){4}}
\put(4,0){\line(0,1){4}}

\put(0,0){\line(1,1){2.1}}
\put(2,0){\line(1,1){2.1}}

}

\put(7,0){


\put(-1,4.3){1} \put(1.3,4.3){3} \put(4.3,4.3){1} 
\put(-1,2.3){3} \put(2.2,1.2){2} \put(4.3,2.3){3} 
\put(-1,-0.8){1} \put(1.3,-0.8){4} \put(4.3,-0.8){?} 

\put(0,4){\p} \put(1.8,3.8){$\circ$} \put(4,4){\p} 
\put(0,2){\p} \put(2,2){\p} \put(4,2){\p} 
\put(0,0){\p} \put(2,0){\p} \put(4,0){\p} 

\put(0,0){\line(1,0){4}}
\put(0,2){\line(1,0){4}}
\put(0,4){\line(1,0){4}}
\put(0,0){\line(0,1){4}}
\put(2,0){\line(0,1){4}}
\put(4,0){\line(0,1){4}}

\put(0,0){\line(1,1){2.1}}
\put(2,2){\line(1,1){2.1}}
\put(2,0){\line(1,1){2.1}}
\put(0,4){\line(1,-1){2.1}}

}

\end{picture}
\caption{Two subcases in Case 3 in Figure~\ref{conflict-situations}.} \label{case3-subcases}
\end{center}
\end{figure}

By Lemma~\ref{lemma-grid} and Theorem~\ref{T1-T2} we have the truth of the following statement. 

\begin{theorem} A triangulation of a grid graph is word-representable if and only if it is $3$-colorable.\end{theorem}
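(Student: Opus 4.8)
The plan is to establish the theorem by simply combining the results already assembled in this section, since essentially all of the substantive work has been carried out in Lemma~\ref{lemma-grid} and Theorem~\ref{T1-T2}. The forward implication is immediate: if a triangulation $T$ of a grid graph is $3$-colorable, then Theorem~\ref{thm:3col} guarantees that $T$ is word-representable, and nothing further is needed.

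For the reverse implication I would argue by contrapositive, showing that a triangulation which is not $3$-colorable cannot be word-representable. First I would invoke Lemma~\ref{lemma-grid}: if $T$ is not $3$-colorable, then $T$ contains some graph $G\in S$ as an induced subgraph. Since every member of $S$ is obtained from $T_1$ or $T_2$ by a rotation, it is isomorphic to $T_1$ or $T_2$, and word-representability is clearly preserved under graph isomorphism (relabelling the letters of a representing word relabels the graph accordingly). Hence Theorem~\ref{T1-T2} yields that every $G\in S$ is non-word-representable.

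The one point I would record explicitly is the hereditary property of word-representable graphs with respect to induced subgraphs: if $H$ is an induced subgraph of a word-representable graph $G'$, then $H$ is itself word-representable, because deleting from a representing word of $G'$ all occurrences of letters outside $V(H)$ produces a word representing $H$ (the alternation pattern of any surviving pair of letters is unchanged by such deletions, and pairs of letters in $H$ are adjacent in $H$ exactly when they are adjacent in $G'$). Taking the contrapositive, any graph containing a non-word-representable induced subgraph is itself non-word-representable. Applying this to $T$ together with its induced subgraph $G\in S$ completes the reverse implication, and combining the two directions gives the theorem.

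I do not expect any serious obstacle here; the proof is purely an assembly of Lemma~\ref{lemma-grid}, Theorem~\ref{T1-T2}, and Theorem~\ref{thm:3col}. The only step that genuinely needs to be stated rather than merely cited is the hereditary property above, as it is precisely what licenses passing from the non-representability of a small induced subgraph to the non-representability of the whole triangulation.
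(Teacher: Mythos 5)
Your proposal is correct and follows exactly the paper's route: the paper derives this theorem directly from Lemma~\ref{lemma-grid} and Theorem~\ref{T1-T2} (with Theorem~\ref{thm:3col} for the easy direction), relying implicitly on the hereditary property of word-representability under induced subgraphs, which you rightly make explicit. No further comment is needed.
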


\subsection{Triangulations of a convex polyomino}\label{tr-conv-polyomino}

Recall that $S$ is the set of eight graphs formed by rotations of $T_1$ and $T_2$ in Figure~\ref{non-repr-triang} by degrees multiple to $90\,^{\circ}$.

\begin{lemma}\label{lemma-conv-polyomino} A triangulation $T$ of a convex polyomino is $3$-colorable if and only if it does not contain a graph from $S$ as an induced subgraph.  \end{lemma}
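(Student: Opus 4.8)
The plan is to reduce the convex-polyomino case to the grid-graph case that was already settled in Lemma~\ref{lemma-grid}. As in that lemma, the nontrivial direction is the contrapositive: assuming the triangulation $T$ of a convex polyomino is not $3$-colorable, I must exhibit a member of $S$ as an induced subgraph. The forward direction (if $T$ contains a graph from $S$, then $T$ is not $3$-colorable) is immediate, since the graphs in $S$ are themselves not $3$-colorable and $3$-colorability is inherited by induced subgraphs.

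First I would set up the same greedy coloring procedure used in Lemma~\ref{lemma-grid}: color the vertices row by row, left to right, using colors in $\{1,2,3\}$ whenever possible, and let $v$ be the first vertex forced to receive color~$4$. The key observation is that the local analysis from the grid-graph proof is genuinely \emph{local}: the three conflict configurations $C_1, C_2, C_3$ of Figure~\ref{conflict-situations}, and the subsequent subcase analysis in Figures~\ref{case1-subcases}--\ref{case3-subcases}, only ever reference $v$, its neighbours, and a surrounding $3\times 3$ block of vertices. So the heart of the argument is to show that in a convex polyomino, whenever color~$4$ is first forced at an interior vertex $v$, the $3\times 3$ grid neighborhood of $v$ is actually present in $T$ as an induced subgraph. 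Once that is established, the identical case-by-case reasoning of Lemma~\ref{lemma-grid} produces a graph from $S$.

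The main obstacle, and the only place where convexity (as opposed to the full grid) must be used, is verifying that $v$ together with all eight surrounding lattice positions really are vertices of the polyomino, so that the $3\times 3$ block exists. Here I would invoke column- and row-convexity: because $v$ lies in row $i \ge 3$ and, by the argument echoing Case~1 of Lemma~\ref{lemma-grid}, in a column $j$ bounded away from the boundary, the cells immediately above, below, left, and right of $v$ cannot be ``holes.'' Column convexity forbids a gap in the vertical line through $v$ between already-colored cells above and $v$ itself, and row convexity does the same horizontally; combining the two rules out the diagonal positions as well. I would argue that if any of the eight neighbors were absent, then $v$ would in fact lie on the boundary of the polyomino in a way that either contradicts its being the \emph{first} forced use of color~$4$ (the missing neighbor would have removed a color constraint, so a color in $\{1,2,3\}$ would have been available) or contradicts convexity directly.

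Having secured the full $3\times 3$ induced neighborhood around $v$, the remainder of the proof is a verbatim reuse of the grid-graph analysis: the three cases $C_1, C_2, C_3$ with their respective subcases each partially determine the triangulation, and in every subcase the constraint that $v$ is the first vertex to require color~$4$ forces the remaining diagonal edge to be placed in the unique way that yields an induced copy of $T_1$ or $T_2$ (up to rotation), hence a member of $S$. I expect the write-up to be short once the $3\times 3$-neighborhood claim is nailed down, since all the coloring bookkeeping transfers unchanged; the genuine content is entirely in the convexity argument of the preceding paragraph.
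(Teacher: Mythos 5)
There is a genuine gap, and it sits exactly where you located the ``genuine content'' of your argument: the claim that row- and column-convexity force the full $3\times 3$ block around $v$ to be present in $T$. This is false. A convex polyomino can be, for instance, staircase-shaped, and the first vertex $v$ forced to receive color $4$ can occur at a position where the cells above-left of the conflict square (or the entire column to the left, or the row above) simply do not belong to the polyomino. The paper's Figure~\ref{conflict-situations-polyomino} makes this explicit: the regions $A$, $B$, $C$ surrounding the conflict ``can possibly contain no other vertices than those shown in the figure.'' Your fallback argument does not rescue this: you assert that a missing neighbor ``would have removed a color constraint, so a color in $\{1,2,3\}$ would have been available,'' but the three neighbors of $v$ that actually force color $4$ (those colored $1$, $2$, $3$ in $C_1$, $C_2$, $C_3$) are all present in every conflict configuration. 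What is missing is the wider context, and its absence does not directly free up a color at $v$.

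The idea your proposal lacks is the recoloring step that constitutes most of the paper's proof. When one of the regions $A$, $B$, $C$ has no internal vertex, the paper shows that a whole region of the already-colored part of $T$ can be recolored by a permutation of $\{1,2,3\}$ (e.g.\ $1\rightarrow 3$, $2\rightarrow 2$, $3\rightarrow 1$ on $C$) without breaking properness elsewhere, after which $v$ no longer needs color $4$ and the greedy procedure continues; since $T$ is not $3$-colorable this cannot go on forever, so eventually all relevant regions have internal vertices, the $3\times 3$ block genuinely exists (here is where convexity is used), and the grid-graph case analysis applies. Without this recoloring mechanism your reduction to Lemma~\ref{lemma-grid} does not go through.
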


\begin{proof} Assume that $T$ is not $3$-colorable and thus it can be colored in four colors. Our proof is an extension of the proof of Lemma~\ref{lemma-grid}. We use the same approach to color vertices of a triangulation $T$  of a convex polyomino as in the proof of  Lemma~\ref{lemma-grid} until we are forced 
to use color 4. We will show that either $T$ contains a graph from $S$ as an induced subgraph, or the vertices colored so far can be recolored to avoid usage of color 4; in the later case our arguments can be repeated until eventually it will be shown that $T$ contains a graph from $S$ (otherwise a contradiction would be obtained with $T$ being non-3-colorable). Once again, there are three possible situations that are shown in Figure~\ref{conflict-situations-polyomino}, where the areas of the convex polyomino labeled by $A$, $B$ and $C$ can possibly contain no other vertices than those shown in the figure colored by 1, 2 and 3. We assume that vertices on the boundary of two areas belong to both areas; in particular, the vertex colored by 2 in the leftmost picture in Figure~\ref{conflict-situations-polyomino} belongs to all three areas.  Finally, we call a vertex in an area {\em internal}, if it belongs only to a single area.\\

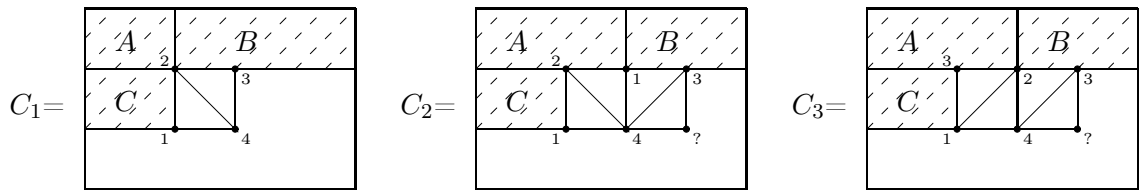
\begin{figure}[h]
\begin{center}
\begin{picture}(8,7)


\put(-12,0){

\put(-2.5,2.5){$C_1$=}

\put(2.5,1.5){{\tiny 1}} 
\put(2.5,4.1){{\tiny 2}} 
\put(5.2,1.5){{\tiny 4}} 
\put(5.2,3.5){{\tiny 3}} 

\put(3,2){\p} \put(5,2){\p} \put(3,4){\p} \put(5,4){\p} 

\put(0,0){\line(1,0){9}}
\put(0,6){\line(1,0){9}}
\put(0,2){\line(1,0){5}}
\put(3,4){\line(1,0){6}}
\put(3,2){\line(0,1){2}}
\put(5,2){\line(0,1){2}}
\put(3,4){\line(1,-1){2.1}}
\put(0,0){\line(0,1){6}}
\put(9,0){\line(0,1){6}}

\multiput(0,2)(0.5,0.5){8}{\line(1,1){0.2}}
\multiput(1,2)(0.5,0.5){8}{\line(1,1){0.2}}
\multiput(2,2)(0.5,0.5){2}{\line(1,1){0.2}}
\multiput(0,3)(0.5,0.5){6}{\line(1,1){0.2}}
\multiput(0,4)(0.5,0.5){4}{\line(1,1){0.2}}
\multiput(0,5)(0.5,0.5){2}{\line(1,1){0.2}}
\multiput(4,4)(0.5,0.5){4}{\line(1,1){0.2}}
\multiput(5,4)(0.5,0.5){4}{\line(1,1){0.2}}
\multiput(6,4)(0.5,0.5){4}{\line(1,1){0.2}}
\multiput(7,4)(0.5,0.5){4}{\line(1,1){0.2}}
\multiput(8,4)(0.5,0.5){2}{\line(1,1){0.2}}


\put(0,4){\line(1,0){3}}
\put(3,4){\line(0,1){2}}
\put(1,4.6){$A$}
\put(1,2.6){$C$}
\put(5,4.6){$B$}

}


\put(1,0){

\put(-2.5,2.5){$C_2$=}

\put(2.5,1.5){{\tiny 1}} 
\put(2.5,4.1){{\tiny 2}} 
\put(5.2,1.5){{\tiny 4}} 
\put(5.2,3.5){{\tiny 1}} 
\put(7.2,3.5){{\tiny 3}} 
\put(7.2,1.5){{\tiny ?}} 

\put(3,2){\p} \put(5,2){\p} \put(3,4){\p} \put(5,4){\p} \put(7,4){\p} \put(7,2){\p} 

\put(5,2){\line(1,1){2.1}}
\put(0,0){\line(1,0){9}}
\put(0,6){\line(1,0){9}}
\put(0,2){\line(1,0){7}}
\put(3,4){\line(1,0){6}}
\put(3,2){\line(0,1){2}}
\put(5,2){\line(0,1){2}}
\put(3,4){\line(1,-1){2.1}}
\put(0,0){\line(0,1){6}}
\put(9,0){\line(0,1){6}}
\put(7,2){\line(0,1){2}}

\multiput(0,2)(0.5,0.5){8}{\line(1,1){0.2}}
\multiput(1,2)(0.5,0.5){8}{\line(1,1){0.2}}
\multiput(2,2)(0.5,0.5){2}{\line(1,1){0.2}}
\multiput(0,3)(0.5,0.5){6}{\line(1,1){0.2}}
\multiput(0,4)(0.5,0.5){4}{\line(1,1){0.2}}
\multiput(0,5)(0.5,0.5){2}{\line(1,1){0.2}}
\multiput(4,4)(0.5,0.5){4}{\line(1,1){0.2}}
\multiput(5,4)(0.5,0.5){4}{\line(1,1){0.2}}
\multiput(6,4)(0.5,0.5){4}{\line(1,1){0.2}}
\multiput(7,4)(0.5,0.5){4}{\line(1,1){0.2}}
\multiput(8,4)(0.5,0.5){2}{\line(1,1){0.2}}


\put(0,4){\line(1,0){3}}
\put(5,4){\line(0,1){2}}
\put(1,4.6){$A$}
\put(1,2.6){$C$}
\put(6,4.6){$B$}

}


\put(14,0){

\put(-2.5,2.5){$C_3$=}

\put(2.5,1.5){{\tiny 1}} 
\put(2.5,4.1){{\tiny 3}} 
\put(5.2,1.5){{\tiny 4}} 
\put(5.2,3.5){{\tiny 2}} 
\put(7.2,3.5){{\tiny 3}} 
\put(7.2,1.5){{\tiny ?}} 

\put(3,2){\p} \put(5,2){\p} \put(3,4){\p} \put(5,4){\p} \put(7,4){\p} \put(7,2){\p} 

\put(5,2){\line(1,1){2.1}}
\put(0,0){\line(1,0){9}}
\put(0,6){\line(1,0){9}}
\put(0,2){\line(1,0){7}}
\put(3,4){\line(1,0){6}}
\put(3,2){\line(0,1){2}}
\put(5,2){\line(0,1){2}}
\put(3,2){\line(1,1){2.1}}
\put(0,0){\line(0,1){6}}
\put(9,0){\line(0,1){6}}
\put(7,2){\line(0,1){2}}

\multiput(0,2)(0.5,0.5){8}{\line(1,1){0.2}}
\multiput(1,2)(0.5,0.5){8}{\line(1,1){0.2}}
\multiput(2,2)(0.5,0.5){2}{\line(1,1){0.2}}
\multiput(0,3)(0.5,0.5){6}{\line(1,1){0.2}}
\multiput(0,4)(0.5,0.5){4}{\line(1,1){0.2}}
\multiput(0,5)(0.5,0.5){2}{\line(1,1){0.2}}
\multiput(4,4)(0.5,0.5){4}{\line(1,1){0.2}}
\multiput(5,4)(0.5,0.5){4}{\line(1,1){0.2}}
\multiput(6,4)(0.5,0.5){4}{\line(1,1){0.2}}
\multiput(7,4)(0.5,0.5){4}{\line(1,1){0.2}}
\multiput(8,4)(0.5,0.5){2}{\line(1,1){0.2}}


\put(0,4){\line(1,0){3}}
\put(5,4){\line(0,1){2}}
\put(1,4.6){$A$}
\put(1,2.6){$C$}
\put(6,4.6){$B$}

}

\end{picture}
\caption{Three possible cases of appearance of color 4 in coloring of $T$.} \label{conflict-situations-polyomino}
\end{center}
\end{figure}

\noindent
{\bf Case 1: Triangulation $C_1$ in Figure~\ref{conflict-situations-polyomino}}.  We consider two subcases here: \\

\noindent
{\bf Subcase 1.1:} $A$ has at least one internal vertex. If $B$ and $C$ each have at least one internal vertex,  then, taking into account that the polyomino is convex, $T$ has a $3\times 3$ grid graph with the bottom rightmost vertex colored by 4 as an induced subgraph, and exactly the same arguments as those in the proof of Case 1 in Lemma~\ref{lemma-grid} can be applied to see that $T$ contains a graph from $S$ as an induced subgraph. 
On the other hand, if $B$ (resp., $C$) does not have an internal vertex, then the vertex colored by $3$ (resp., $1$) in the picture could be recolored in $1$ (resp., $3$) so that there would be no need for color 4, and we would continue colouring $T$ until color $4$ needs to be used (we would then find ourselves considering again one of the three cases with more vertices already colored).\\

\noindent
{\bf Subcase 1.2:} $A$ does not have any internal vertices. We can recolor vertices of $C$ as follows: $1\rightarrow 3$, $2\rightarrow 2$ and $3\rightarrow 1$ (in particular, vertices colored by 2 keep the same color). Recoloring  does not affect coloring in $B$, that is, we still have a proper coloring of a part of $T$. But then we see that usage of 4 is unnecessary: that vertex can be recolored using color $1$, and we can continue coloring $T$ until color 4 needs to be used.\\

\noindent
{\bf Case 2: Triangulation $C_2$ in Figure~\ref{conflict-situations-polyomino}}.  Again, we consider two subcases here: \\

\noindent
{\bf Subcase 1.1:} $A$ has at least one internal vertex. If $B$ has an internal vertex then, taking into account convexity, we can use the argument in the proof of Case 2 in Lemma~\ref{lemma-grid} applied to the  $3\times 3$ grid graph with the bottom rightmost vertex marked by ? to obtain the desired. On the other hand, if $B$ has no internal nodes, then the bottom border of $B$ (containing at least  two vertices colored 1 and 3) can be recolored as  $1\rightarrow 1$, $2\rightarrow 3$ and $3\rightarrow 2$  keeping the property of being a proper coloring. The recoloring shows that the usage of color 4 was unnecessary --- color 3 can be used instead, and coloring $T$ can be continued until there is a need of usage of color $4$.\\

\noindent
{\bf Subcase 1.2:} $A$ does not  have an internal vertex. In this case, $C$ can be recolored as $1\rightarrow 1$, $2\rightarrow 3$ and $3\rightarrow 2$.  The recoloring shows that the usage of color 4 was unnecessary --- color 2 can be used instead, and coloring $T$ can be continued until there is a need for color $4$.\\

\noindent
{\bf Case 3: Triangulation $C_3$ in Figure~\ref{conflict-situations-polyomino}}.   Once again, we consider two subcases here: \\

\noindent
{\bf Subcase 1.1:} $A$ has at least one internal vertex. If $B$ has an internal vertex then, taking into account convexity, we can use the argument in the proof of Case 2 in Lemma~\ref{lemma-grid} applied to the  $3\times 3$ grid graph with the bottom rightmost vertex marked by ? to obtain the desired.   On the other hand, if $B$ has no internal nodes, then the bottom border of $B$ (containing at least  two vertices colored 2 and 3) can be recolored as  $1\rightarrow 3$, $2\rightarrow 2$ and $3\rightarrow 1$  keeping the property of being a proper coloring. The recoloring shows that the usage of color 4 was unnecessary --- color 3 can be used instead, and coloring $T$ can be continued until there is a need for usage of color $4$. \\

\noindent
{\bf Subcase 1.2:} $A$ does not  have an internal vertex. In this final case, $C$ can be recolored as $1\rightarrow 3$, $2\rightarrow 2$ and $3\rightarrow 1$.  The recoloring shows that the usage of color 4 was unnecessary --- color 1 can be used instead, and coloring $T$ can be continued until there is a need for color $4$.\\

\vspace{-0.8cm} \end{proof}

Our main result in this paper, Theorem~\ref{main-thm}, now follows from Lemma~\ref{lemma-conv-polyomino} and Theorem~\ref{T1-T2}.

\subsection{Triangulations of an arbitrary polyomino}\label{triangArbitrary}

Theorem~\ref{main-thm} is not true for triangulations of an arbitrary polyomino. Indeed, consider the triangulation $T$ of the polyomino on 7 squares in Figure~\ref{4-color-repr-polyomino} (where the center square does not belong to the polyomino). It is easy to see that $T$ is not 3-colorable, e.g. by letting the top leftmost vertex be colored by 1 and the vertex horizontally next to it be colored by 2.  On the other hand, $T$ accepts a semi-transitive orientation shown to the right in Figure~\ref{4-color-repr-polyomino}. This orientation is obtained by using a coloring of $T$ in four colors described above and orienting edges following the rules:
$$1\rightarrow 2, 1\rightarrow 3, 1\rightarrow 4, 2\rightarrow 3, 2\rightarrow 4, \mbox{\ and\ }4\rightarrow 3.$$ To see that the orientation is semi-transitive, one can observe that the only possible shortcuts must have a directed path $1\rightarrow 2\rightarrow 4\rightarrow 3$ and the edge $1\rightarrow 3$; however, there are only three directed paths $1\rightarrow 2\rightarrow 4\rightarrow 3$ in the orientation, and in each case beginning and ending of such a path are not connected by an edge $1\rightarrow 3$.

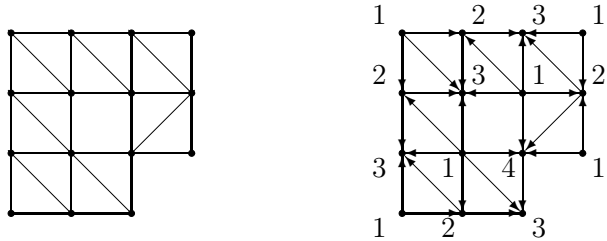
\begin{figure}[h]
\begin{center}
\begin{picture}(7,6.5)

\put(-7,0){



\put(0,6){\p} \put(2,6){\p} \put(4,6){\p} \put(6,6){\p} 
\put(0,4){\p} \put(2,4){\p} \put(4,4){\p} \put(6,4){\p} 
\put(0,2){\p} \put(2,2){\p} \put(4,2){\p} \put(6,2){\p} 
\put(0,0){\p} \put(2,0){\p} \put(4,0){\p} 

\put(0,0){\line(1,0){4}}
\put(0,2){\line(1,0){6}}
\put(0,4){\line(1,0){6}}
\put(0,6){\line(1,0){6}}
\put(0,0){\line(0,1){6}}
\put(2,0){\line(0,1){6}}
\put(4,0){\line(0,1){6}}
\put(6,2){\line(0,1){4}}

\put(2,0){\line(-1,1){2.1}}
\put(2,2){\line(-1,1){2.1}}
\put(2,4){\line(-1,1){2.1}}
\put(4,0){\line(-1,1){2.1}}
\put(4,4){\line(-1,1){2.1}}
\put(4,2){\line(1,1){2.1}}
\put(6,4){\line(-1,1){2.1}}


}


\put(6,0){

\put(-1,6.3){1} \put(2.3,6.3){2} \put(4.3,6.3){3} \put(6.3,6.3){1} 
\put(-1,4.3){2} \put(2.3,4.3){3} \put(4.3,4.3){1} \put(6.3,4.3){2} 
\put(-1,1.2){3} \put(1.3,1.2){1} \put(3.3,1.2){4} \put(6.3,1.2){1} 
\put(-1,-0.8){1} \put(1.3,-0.8){2} \put(4.3,-0.8){3} 

\put(0,6){\p} \put(2,6){\p} \put(4,6){\p} \put(6,6){\p} 
\put(0,4){\p} \put(2,4){\p} \put(4,4){\p} \put(6,4){\p} 
\put(0,2){\p} \put(2,2){\p} \put(4,2){\p} \put(6,2){\p} 
\put(0,0){\p} \put(2,0){\p} \put(4,0){\p} 

\put(2,0){\vector(-1,1){1.9}}
\put(0,0){\vector(1,0){1.9}}
\put(0,0){\vector(0,1){1.9}}
\put(2,2){\vector(0,-1){1.9}}
\put(2,0){\vector(1,0){1.9}}
\put(2,2){\vector(1,-1){1.9}}
\put(2,2){\vector(1,0){1.9}}
\put(4,2){\vector(0,-1){1.9}}
\put(6,2){\vector(-1,0){1.9}}
\put(6,2){\vector(0,1){1.9}}
\put(2,2){\vector(-1,0){1.9}}
\put(0,4){\vector(0,-1){1.9}}
\put(0,4){\vector(1,0){1.9}}
\put(2,2){\vector(-1,1){1.9}}
\put(2,2){\vector(0,1){1.9}}
\put(4,4){\vector(-1,0){1.9}}
\put(4,4){\vector(0,-1){1.9}}
\put(4,4){\vector(1,0){1.9}}
\put(6,4){\vector(-1,-1){1.9}}
\put(0,6){\vector(0,-1){1.9}}
\put(0,6){\vector(1,-1){1.9}}
\put(0,6){\vector(1,0){1.9}}
\put(2,6){\vector(0,-1){1.9}}
\put(2,6){\vector(1,0){1.9}}
\put(4,4){\vector(-1,1){1.9}}
\put(4,4){\vector(0,1){1.9}}
\put(6,4){\vector(-1,1){1.9}}
\put(6,6){\vector(0,-1){1.9}}
\put(6,6){\vector(-1,0){1.9}}


}

\end{picture}
\caption{A non-3-colorable triangulation of a polyomino, and its semi-transitive orientation.} \label{4-color-repr-polyomino}
\end{center}
\end{figure}

\section{Replacing 4-cycles in a polyomino by $K_4$}\label{sec4}

In this section we discuss an operation on an {\em arbitrary} polyomino that {\em always} results in a word-representable graph, as opposed to triangulations considered above. Namely, we consider replacement of each 4-cycle in a polyomino with the complete graph $K_4$; see Figure~\ref{K4-replacement-ex-1} for a respective example. 

\begin{figure}[h]
\begin{center}
\begin{picture}(7,4.5)

\put(-6,0){


\put(0,4){\p} \put(2,4){\p} \put(4,4){\p} \put(6,4){\p} 
\put(0,2){\p} \put(2,2){\p} \put(4,2){\p} \put(6,2){\p} 
\put(0,0){\p} \put(2,0){\p} \put(4,0){\p} 

\put(0,0){\line(1,0){4}}
\put(0,2){\line(1,0){6}}
\put(0,4){\line(1,0){6}}
\put(0,0){\line(0,1){4}}
\put(2,0){\line(0,1){4}}
\put(4,0){\line(0,1){4}}
\put(6,2){\line(0,1){2}}

}

\put(4,0){


\put(0,4){\p} \put(2,4){\p} \put(4,4){\p} \put(6,4){\p} 
\put(0,2){\p} \put(2,2){\p} \put(4,2){\p} \put(6,2){\p} 
\put(0,0){\p} \put(2,0){\p} \put(4,0){\p} 

\put(0,0){\line(1,0){4}}
\put(0,2){\line(1,0){6}}
\put(0,4){\line(1,0){6}}
\put(0,0){\line(0,1){4}}
\put(2,0){\line(0,1){4}}
\put(4,0){\line(0,1){4}}
\put(6,2){\line(0,1){2}}

\put(0,0){\line(1,1){4}}
\put(2,0){\line(1,1){4}}
\put(0,2){\line(1,1){2}}
\put(0,4){\line(1,-1){4}}
\put(2,4){\line(1,-1){2}}
\put(4,4){\line(1,-1){2}}
\put(0,2){\line(1,-1){2}}

}

\end{picture}
\caption{An example of replacement of 4-cycles in a polyomino by $K_4$.} \label{K4-replacement-ex-1}
\end{center}
\end{figure}
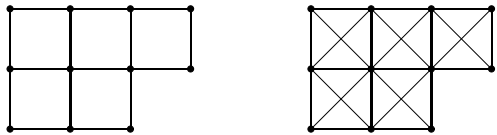

\begin{theorem}\label{K4-is-word-repr} Replacing each $4$-cycle in a polyomino $\mathcal{P}$ by $K_4$ gives a word-representable graph $\mathcal{P}_{K_4}$.\end{theorem}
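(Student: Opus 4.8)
The plan is to apply Theorem~\ref{thm:semi-trans} and construct a semi-transitive orientation of $\mathcal{P}_{K_4}$. A first structural observation guides the whole construction: a semi-transitive orientation is acyclic, and any acyclic orientation of a complete graph is a transitive tournament, so every copy of $K_4$ coming from a square of $\mathcal{P}$ must be oriented linearly. Consequently the orientation I seek is induced by a single linear order $\prec$ on the vertex set $V(\mathcal{P}_{K_4})\subseteq\mathbb{Z}^2$, orienting each edge from its $\prec$-smaller to its $\prec$-larger endpoint. Such an orientation is automatically acyclic and restricts to a transitive tournament on each square, so \emph{within} a single square no shortcut can occur (a transitive tournament has none). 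The entire problem therefore reduces to excluding \emph{cross-square} shortcuts: directed paths $v_1\prec v_2\prec\cdots\prec v_k$ with $k\ge 4$ whose endpoints $v_1,v_k$ are corners of a common square $Q$ (the only way $v_1v_k$ can be an edge) but whose intermediate vertices wander out of $Q$, using the diagonals of neighbouring squares, so that some $v_iv_j$ fails to be an edge.

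The natural first attempt is a coordinate sweep in which each square $Q=[i_0,i_0+1]\times[j_0,j_0+1]$ has its corner $\mathrm{BL}=(i_0,j_0)$ as source and $\mathrm{TR}=(i_0{+}1,j_0{+}1)$ as sink of its $K_4$. This already disposes of \emph{holes}: if a square is absent but its four edge-neighbours are present (as at the centre of an annular polyomino), its boundary $4$-cycle is chordless and is oriented with source and sink diagonally opposite, so it carries no directed Hamiltonian path and hence no shortcut. The delicate part is the orientation of the anti-diagonals $\mathrm{BR}$--$\mathrm{TL}$, equivalently the tie-breaks along each anti-diagonal line $i+j=\text{const}$. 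A short analysis shows that a square together with its left (respectively upper) neighbour admits a detour $\mathrm{BL}\to(\text{neighbour's anti-diagonal corner})\to\cdots\to\mathrm{TR}$ that turns the main diagonal of $Q$ into a shortcut precisely when the relevant anti-diagonal is tie-broken the wrong way. Checking the two ``domino'' configurations and the $2\times 2$ block reveals that a horizontally adjacent neighbour and a vertically adjacent neighbour demand \emph{opposite} orientations of such edges; in particular no uniform rule — no global linear functional $\alpha i+\beta j$, and neither the row- nor the column-lexicographic order — can succeed for every polyomino, so the order must genuinely depend on $\mathcal{P}$.

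To obtain a $\mathcal{P}$-dependent order I would proceed by induction, attaching the squares one at a time in an order $Q_1,Q_2,\dots,Q_m$ in which each $Q_t$ ($t\ge 2$) shares at least one edge with $Q_1\cup\cdots\cup Q_{t-1}$ (possible since a polyomino is edge-connected), and maintaining a semi-transitive linear order on the region built so far. Adding $Q_t$ introduces at most two new vertices together with the new $K_4$; since the shared vertices are already placed, the step amounts to inserting the new vertices into $\prec$ and checking that no detour is completed. Because the endpoints $v_1,v_k$ of any putative shortcut must lie in a common square, the monotonicity of $\prec$ confines all intermediate vertices to a bounded neighbourhood of that square, so each inductive step reduces to a finite local case analysis on the few squares surrounding the newly attached one.

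The main obstacle I anticipate is exactly the conflicting orientation requirements that neighbouring squares place on the diagonals they flank: the verification must show that, as squares are attached, these local demands can always be met simultaneously, i.e.\ that a consistent global choice of tie-breaks exists and leaves no surviving shortcut. I expect the resolution to rest on the fact that the mutually incompatible demands always concern \emph{distinct} anti-diagonal lines (or can be separated by the order of attachment), so that the insertions made at each step never contradict earlier ones; carrying out this separation rigorously, and completing the accompanying local case analysis, is the technical heart of the proof.
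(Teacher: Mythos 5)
Your reduction to a global linear order and to cross-square shortcuts is sound, but the framework you then commit to --- every square oriented with its bottom-left corner as source and its top-right corner as sink, with only the anti-diagonal tie-breaks left free --- provably cannot be completed, already for the $2\times 2$ block of four squares. Your own local analysis shows why. Label the squares $Q_1,Q_2,Q_3,Q_4$ (bottom-left, bottom-right, top-left, top-right), so that $\mathrm{BR}_3=\mathrm{BL}_4$, $\mathrm{TR}_3=\mathrm{TL}_4$, $\mathrm{TL}_2=\mathrm{BL}_4$ and $\mathrm{TR}_2=\mathrm{BR}_4$. The horizontal pair $(Q_3,Q_4)$ forces the anti-diagonal of $Q_4$ to be oriented $\mathrm{TL}_4\to\mathrm{BR}_4$: otherwise $\mathrm{BL}_3\to\mathrm{BR}_3\to\mathrm{BR}_4\to\mathrm{TL}_4$ is a directed path (its first two edges are bottom edges out of sources, hence forced) whose endpoints are joined by the forced diagonal $\mathrm{BL}_3\to\mathrm{TR}_3$, while $\mathrm{BL}_3$ and $\mathrm{BR}_4$ are not adjacent --- a shortcut. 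Symmetrically, the vertical pair $(Q_2,Q_4)$ forces the same edge the other way, $\mathrm{BR}_4\to\mathrm{TL}_4$, via the path $\mathrm{BL}_2\to\mathrm{TL}_2\to\mathrm{TL}_4\to\mathrm{BR}_4$ and the non-adjacent pair $\mathrm{BL}_2,\mathrm{TL}_4$. These two demands concern the \emph{same} anti-diagonal, so the escape you hope for (``incompatible demands always concern distinct anti-diagonal lines, or can be separated by the order of attachment'') fails at the very first non-trivial example; no attachment order helps because the conflict is intrinsic to the finished block. Since the fixed source/sink convention is the premise of your entire inductive case analysis, the plan does not go through; and the ``technical heart'' is in any case only announced, not carried out.

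The fix is to let the source corner vary from square to square, and then a \emph{uniform} rule does exist --- contrary to your conclusion that the order must genuinely depend on $\mathcal{P}$ (your impossibility argument only rules out linear functionals and lexicographic orders, too narrow a class). The paper orients the full grid-with-$K_4$'s once and for all: within each row the vertices alternate between horizontal sources and horizontal sinks, the alternation phase shifts by one between consecutive rows, and every vertical and diagonal edge points downward. Each $K_4$ is then a transitive tournament whose source is one of its two top corners, and a short inspection (any directed path descends at most one row per step and can make at most one horizontal move per row) shows that every directed path of length at least $3$ with adjacent endpoints is the Hamiltonian path of a single $K_4$, so no shortcuts arise. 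An arbitrary $\mathcal{P}_{K_4}$ is then handled by embedding it in this oriented grid and deleting the added material.
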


\begin{proof} We begin with providing a semi-transitive orientation of the graph $G$ obtained from a grid graph by replacing each 4-cycle by $K_4$, and then we discuss the case of an arbitrary polyomino.

We call a vertex in an oriented copy of $G$ a {\em horizontal sink} (resp., {\em horizontal source}) if there are  no horizontal edges coming out of (resp., coming in to) the vertex. 

A semi-transitive orientation of $G$ can now be described as follows. Make the top row of $G$, as well as any odd row from the top, be a sequence of alternating horizontal sources and sinks (from left to right), while all other rows be a sequence of alternating horizontal sinks and sources as shown in Figure~\ref{K4-replacement-ex}. Moreover, we orient all other edges of $G$ downwards (see Figure~\ref{K4-replacement-ex}). Clearly, the orientation is acyclic. Furthermore, it is easy to see that the orientation of $G$ is semi-transitive. Indeed, starting from a vertex $v$ and looking at paths of length 3 or more ending in vertex $u$, we see by inspection that there are only two possibilities: 
\begin{itemize}
\item $v$ and $u$ are not connected by an edge (most common situation) thus giving no chance for a shortcut;
\item there is an edge from $v$ to $u$. In this case, the directed path must be of length 3 and it must cover three external edges  of a $K_4$ oriented transitively. Thus, we do not have a shortcut in this case either.
\end{itemize}

\begin{figure}[h]
\begin{center}
\begin{picture}(9,6.5)

\put(0,0){


\put(0,6){\p} \put(2,6){\p} \put(4,6){\p} \put(6,6){\p} \put(8,6){\p} \put(10,6){\p}
\put(0,4){\p} \put(2,4){\p} \put(4,4){\p} \put(6,4){\p} \put(8,4){\p} \put(10,4){\p}
\put(0,2){\p} \put(2,2){\p} \put(4,2){\p} \put(6,2){\p} \put(8,2){\p} \put(10,2){\p}
\put(0,0){\p} \put(2,0){\p} \put(4,0){\p} \put(6,0){\p} \put(8,0){\p} \put(10,0){\p}

\put(0,6){\vector(1,0){1.9}} \put(4,6){\vector(-1,0){1.9}}\put(4,6){\vector(1,0){1.9}}  \put(8,6){\vector(-1,0){1.9}} \put(8,6){\vector(1,0){1.9}} 
\put(2,4){\vector(-1,0){1.9}} \put(2,4){\vector(1,0){1.9}}\put(6,4){\vector(-1,0){1.9}}  \put(6,4){\vector(1,0){1.9}} \put(10,4){\vector(-1,0){1.9}} 
\put(0,2){\vector(1,0){1.9}} \put(4,2){\vector(-1,0){1.9}}\put(4,2){\vector(1,0){1.9}}  \put(8,2){\vector(-1,0){1.9}} \put(8,2){\vector(1,0){1.9}} 
\put(2,0){\vector(-1,0){1.9}} \put(2,0){\vector(1,0){1.9}}\put(6,0){\vector(-1,0){1.9}}  \put(6,0){\vector(1,0){1.9}} \put(10,0){\vector(-1,0){1.9}} 

\put(0,6){\vector(0,-1){1.9}} \put(2,6){\vector(0,-1){1.9}} \put(4,6){\vector(0,-1){1.9}} \put(6,6){\vector(0,-1){1.9}} \put(8,6){\vector(0,-1){1.9}} \put(10,6){\vector(0,-1){1.9}} 
\put(0,4){\vector(0,-1){1.9}} \put(2,4){\vector(0,-1){1.9}} \put(4,4){\vector(0,-1){1.9}} \put(6,4){\vector(0,-1){1.9}} \put(8,4){\vector(0,-1){1.9}} \put(10,4){\vector(0,-1){1.9}} 
\put(0,2){\vector(0,-1){1.9}} \put(2,2){\vector(0,-1){1.9}} \put(4,2){\vector(0,-1){1.9}} \put(6,2){\vector(0,-1){1.9}} \put(8,2){\vector(0,-1){1.9}} \put(10,2){\vector(0,-1){1.9}} 

\put(0,6){\vector(1,-1){1.9}} \put(2,6){\vector(1,-1){1.9}} \put(4,6){\vector(1,-1){1.9}} \put(6,6){\vector(1,-1){1.9}}  \put(8,6){\vector(1,-1){1.9}}  
\put(0,4){\vector(1,-1){1.9}} \put(2,4){\vector(1,-1){1.9}} \put(4,4){\vector(1,-1){1.9}} \put(6,4){\vector(1,-1){1.9}}  \put(8,4){\vector(1,-1){1.9}}  
\put(0,2){\vector(1,-1){1.9}} \put(2,2){\vector(1,-1){1.9}} \put(4,2){\vector(1,-1){1.9}} \put(6,2){\vector(1,-1){1.9}}  \put(8,2){\vector(1,-1){1.9}}  

\put(2,6){\vector(-1,-1){1.9}} \put(4,6){\vector(-1,-1){1.9}} \put(6,6){\vector(-1,-1){1.9}} \put(8,6){\vector(-1,-1){1.9}}  \put(10,6){\vector(-1,-1){1.9}}  
\put(2,4){\vector(-1,-1){1.9}} \put(4,4){\vector(-1,-1){1.9}} \put(6,4){\vector(-1,-1){1.9}} \put(8,4){\vector(-1,-1){1.9}}  \put(10,4){\vector(-1,-1){1.9}}  
\put(2,2){\vector(-1,-1){1.9}} \put(4,2){\vector(-1,-1){1.9}} \put(6,2){\vector(-1,-1){1.9}} \put(8,2){\vector(-1,-1){1.9}}  \put(10,2){\vector(-1,-1){1.9}}  

}

\end{picture}
\caption{A semi-transitive orientation of $G$.} \label{K4-replacement-ex}
\end{center}
\end{figure}
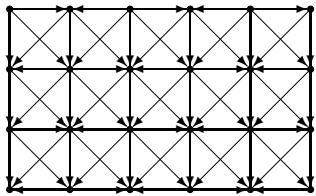

It is now clear how to semi-transitively orient $\mathcal{P}_{K_4}$ for any polyomino $\mathcal{P}$ rather than just for a grid graph. Indeed, $\mathcal{P}_{K_4}$ can be extended to a grid graph $G$ by adding missing $K_4$'s, then we can orient $G$ as above, and finally remove the $K_4$'s that were just added to obtain a semi-transitive orientation of   $\mathcal{P}_{K_4}$ (it is easy to see that removing $K_4$'s from the oriented $G$ cannot introduce any shortcuts). 
\end{proof}

We note that in our orientation of $G$ in the proof of Theorem~\ref{K4-is-word-repr} (see Figure~\ref{K4-replacement-ex}), horizontal sources in different rows are never on top of each other. A similar observation applies to horizontal sinks. If we were to eliminate this condition, thus making odd columns consist of horizontal sources and even columns of horizontal sinks keeping the vertical edges be oriented in the same way, we would obtain an orientation having shortcuts (e.g., see the induced subgraph formed by the first two vertices in the top row and the second and third vertex in the second row). 

\begin{figure}[h]
\begin{center}
\begin{picture}(6,8.5)

\put(-10,0){

\put(1,8){\p} \put(1,6){\p} \put(1,4){\p} \put(1,2){\p} 
\put(0,7.7){1} \put(0,5.7){2} \put(0,3.7){3} \put(0,1.7){4} 
\put(1,8){\line(0,-1){6}}
\put(0.9,0.2){$\vdots$}

\put(2,5){$\longmapsto$}

\put(5,8){\p} \put(7,8){\p} \put(6,6){\p} \put(6,4){\p} \put(6,2){\p} 
\put(4,7.7){1} \put(7.4,7.7){$1'$} \put(5,5.7){2} \put(5,3.7){3} \put(5,1.7){4} 
\put(6,6){\line(0,-1){4}}\put(5,8){\line(1,0){2}}\put(5,8){\line(1,-2){1}}\put(7,8){\line(-1,-2){1}}
\put(5.9,0.2){$\vdots$}

\put(8,5){$\longmapsto$}

\put(11,8){\p} \put(13,8){\p} \put(11,6){\p}  \put(13,6){\p}  \put(12,4){\p} \put(12,2){\p} 
\put(10,7.7){1} \put(13.4,7.7){$1'$} \put(10,5.7){2} \put(13.4,5.7){$2'$} \put(11,3.7){3} \put(11,1.7){4} 
\put(12,4){\line(0,-1){2}}\put(11,8){\line(1,0){2}} \put(11,6){\line(1,0){2}}  \put(11,6){\line(0,1){2}}  \put(11,6){\line(1,1){2}}  
\put(13,6){\line(0,1){2}} \put(13,6){\line(-1,1){2}} \put(11,6){\line(1,-2){1}}\put(13,6){\line(-1,-2){1}}
\put(11.9,0.2){$\vdots$}

\put(14.5,5){$\longmapsto$}
\put(17,5){$\cdots$}
\put(19,5){$\longmapsto$}

\put(23,8){\p} \put(25,8){\p} \put(23,8){\line(1,0){2}} \put(23,8){\line(1,-1){2}} 
\put(23,6){\p} \put(25,6){\p} \put(23,6){\line(1,0){2}} \put(23,6){\line(1,-1){2}} \put(23,6){\line(1,1){2}} 
\put(23,4){\p} \put(25,4){\p} \put(23,4){\line(1,0){2}} \put(23,4){\line(1,-1){2}} \put(23,4){\line(1,1){2}} 
\put(23,2){\p} \put(25,2){\p} \put(23,2){\line(1,0){2}}                                       \put(23,2){\line(1,1){2}} 

\put(22,7.7){1} \put(25.4,7.7){$1'$} 
\put(22,5.7){2} \put(25.4,5.7){$2'$} 
\put(22,3.7){3} \put(25.4,3.7){$3'$} 
\put(22,1.7){4} \put(25.4,1.7){$4'$} 

\put(23,8){\line(0,-1){6}} 
\put(25,8){\line(0,-1){6}} 

\put(23.9,0.2){$\vdots$}

}

\end{picture}
\caption{Generating $\mathcal{P}_{K_4}$ for an $n\times 2$ grid graph from a path graph on $n$ vertices through replacing vertices by the module $K_2$.} \label{K4-n-2-grid}
\end{center}
\end{figure}
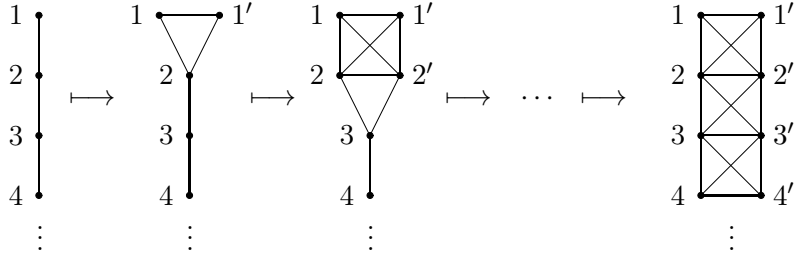

Another remark is that when $\mathcal{P}$ is an $n\times 2$ (equivalently, $2\times n$) grid graph in Theorem~\ref{K4-is-word-repr} (the respective graph $\mathcal{P}_{K_4}$ is presented schematically to the right in Figure~\ref{K4-n-2-grid}), then we can prove this particular case of the theorem by other means. Indeed,  $\mathcal{P}_{K_4}$ can be obtained in this case by substitution the vertices of a path graph (presented to the left in Figure~\ref{K4-n-2-grid}) by {\em modules} $K_2$, as shown schematically in Figure~\ref{K4-n-2-grid}. It is a known fact, see e.g.~\cite[Section 5]{K} that replacing a vertex in a graph by a module turns a word-representable graph into a word-representable graph showing that $\mathcal{P}_{K_4}$ in this case is word-presentable because the path graph on $n$ vertices we started with is such a graph (a particular representation of the graph is $12132435465\ldots$). Alternatively, we can come up directly with a word representing the graph $\mathcal{P}_{K_4}$, e.g. representing it by the word $11'22'11'33'22'44'33'55'\ldots$. However, none of these approaches work, at least that easily, for larger grid graphs with 4-cycles replaced by $K_4$, so it is essential to employ semi-transitive orientations here to prove our results.

\section{Final remarks}\label{final-remarks-sec}  

When we began our studies of triangulations of grid graphs and convex polyominoes from word-representability point of view, we were not hoping for such an elegant result as Theorem~\ref{main-thm}, which deals with an arbitrary triangulation of an arbitrary convex polyomino. As we demonstrated in Subsection~\ref{triangArbitrary}, unfortunately, Theorem~\ref{main-thm} does not hold for an arbitrary polyomino. We leave this as an open question to give a classification of situations when a triangulation of a given (arbitrary) polyomino is word-representable.

\end{document}